\documentclass[reqno,oneside,12pt,hidelinks]{amsart}
\usepackage{amssymb,amsmath,amsthm,amsxtra,calc,bm, color}
\usepackage{enumerate}
\usepackage[margin=.95in]{geometry}

\usepackage{afterpage}

\usepackage[scr=boondoxo]{mathalfa}
\usepackage{mathtools}

\usepackage{mleftright}
\mleftright

\usepackage{nccmath}
\usepackage{cases}

\usepackage{calc}
\usepackage{graphicx}
\usepackage{hyperref}
\usepackage[draft]{microtype}

\usepackage{comment}

\def\Z{\mathbb{Z}}
\def\Q{\mathbb{Q}}

\def\H{\mathbb{H}}

\def\C{\mathbb{C}}

\newcommand{\slz}{\operatorname{SL}_2(\mathbb{Z})}
\renewcommand{\pmod}[1]{\hspace{.5em}\left(\operatorname{mod}#1\right)}

\newcommand{\abcd}{\ensuremath{\left(\begin{smallmatrix}a&b\\c&d\end{smallmatrix}\right)}}

\renewcommand{\tilde}{\widetilde}

\def\spt{\mathrm{spt}}

\newtheorem{theorem}{Theorem}[section]
\newtheorem{lemma}[theorem]{Lemma}
\newtheorem{corollary}[theorem]{Corollary}

\theoremstyle{remark}

\numberwithin{equation}{section}

\mathtoolsset{showonlyrefs=true}

\title[Hecke Relations for Eta Multipliers]{Hecke Relations for Eta Multipliers and Congruences of Higher-Order Smallest Parts Functions}

\date{\today}
\author{Clayton Williams}
\address{Department of Mathematics\\
University of Illinois\\
Urbana, IL 61801} 
\email{cw78@illinois.edu} 


\begin{document}

\begin{abstract}
    We derive identities from Hecke operators acting on a family of Eisenstein-eta quotients, yielding congruences for their coefficients modulo powers of primes. As an application we derive systematic congruences for several higher-order smallest parts functions modulo prime powers, resolving a question of Garvan for these cases. We also relate moments of cranks and ranks to the partition function modulo prime powers. Some of our results strengthen and generalize those of a 2023 paper by Wang and Yang.
\end{abstract}

\maketitle


\section{Introduction}

The partition function $ p(n) $ gives the number of ways of writing $ n\geq 0 $ as a sum of positive integers, and is given by the generating function 
\begin{align*}
\sum_{n=0}^\infty p(n)q^n= \frac{q^{1/24}}{\eta(z)}.
\end{align*}
Ramanujan proposed the following congruences for the partition function:
\begin{align}\label{eqn:ram-cong}
p \left(\frac{\ell^{m} n+1}{24} \right) \equiv 0\begin{dcases}\,\left(\operatorname{mod}\ell^{m}\right)&\ell=5,\\
  \,\left(\operatorname{mod}\ell^{\lfloor m\rfloor+1}\right)&\ell=7,\\
  \,\left(\operatorname{mod}\ell^{m}\right)&\ell=11.\end{dcases}
\end{align}
Note that $ p(n)=0 $ if $ n\not\in \Z_{> 0} $. Proofs were given by Ramanujan, Watson, and Atkin \cite{MR1544457}, \cite{MR1581588}, \cite{MR0205958}.\par 

In 2008 Andrews \cite{MR2456627} introduced the smallest parts function $ \spt(n) $  which counts the number of smallest parts in the partitions of $ n. $ There are also Ramanujan-type congruences for $ \spt(n) $. One such congruence was found by Ono \cite[Corollary 1.2]{MR2770948}, later strengthened in \cite[Theorem 1.1]{MR2822242}. In particular, if $ \ell\geq 5 $ is a prime and $ \left(\frac{-n}{\ell} \right)=1 $ then we have 
\begin{gather}\label{eqn: spt-cong}
\spt \left(\frac{\ell^{2m}n+1}{24}\right)\equiv 0\pmod{\ell^m}.
\end{gather}
This congruence was later explained using identities arising from Hecke operators in \cite{MR3228930}.
 
One can define higher-order smallest parts functions $ \spt_j. $ Write $ \pi\vdash n $ if $ \pi $ is a partition of $ n $. Garvan \cite[Definition 5.2]{MR2822233} defined a weight function $ \omega_j $ for partitions and from this introduced the higher-order smallest parts function
\begin{align*}
\spt_j(n):=\sum_{\pi\vdash n}\omega_j(\pi).
\end{align*}
 Garvan shows $ \spt_1(n)=\spt(n) $ and obtains a number of congruences modulo $ 2,3,5,7, $ and $ 11 $ for $ \spt_2,\,\spt_3,$ and $\spt_4. $ He then asks if there are systematic congruences for $ \spt_j $ modulo primes $ \ell\geq 5 $. We answer this in the affirmative for the first $ 4 $ higher-order smallest parts functions.

\begin{theorem}\label{thm: spt-main}
Let $ \delta_{j,\ell} $ be the Kronecker delta symbol. If $ \ell\geq 5 $ is prime and $ \left(\frac{-n}{\ell} \right)=1 $ we have
\begin{gather}
\spt_2 \left(\frac{\ell^{2m}n+1}{24} \right)\equiv 0\pmod{\ell^{m}},\\
\spt_3 \left(\frac{\ell^{2m}n+1}{24} \right)\equiv 0\pmod{\ell^{m-\delta_{5,\ell}}},\\
\spt_4 \left(\frac{\ell^{2m}n+1}{24} \right)\equiv 0\pmod{\ell^{m+\delta_{5,\ell}-\delta_{7,\ell}}},\\
\spt_5 \left(\frac{\ell^{2m}n+1}{24} \right)\equiv 0\pmod{\ell^{m+\delta_{5,\ell}+\delta_{7,\ell}}}.
\end{gather}
  
\end{theorem}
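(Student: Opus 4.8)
The plan is to realize each $\spt_j$ generating function in terms of an explicit Eisenstein-eta quotient (or a small $\Z$-linear combination of such quotients) and then apply the Hecke-operator identities that are the main technical output of the paper. First I would recall Garvan's quasimodular/modular description of $\sum_n \spt_j(n)q^{n-1/24}$: for $j=1$ the relevant object is (essentially) $\sum \spt(n)q^{n} = \frac{1}{\eta}\bigl(\text{something involving }E_2\text{ and }\eta\text{-quotients}\bigr)$, and for higher $j$ the weight goes up by $2$ each time, so $\spt_j$ is attached to a weight-$(2j-1/2)$ form after multiplying by the appropriate power of $\eta$. Concretely I expect to write $\sum_n \spt_j(n)q^{n+\text{shift}}$ as a combination of the family $F$ of ``Eisenstein-eta quotients'' introduced earlier in the paper, with coefficients in $\Z[\tfrac{1}{6}]$ (the denominators $2,3$ are harmless since $\ell\ge 5$), plus a correction term lying in a space to which the $\ell$-adic vanishing already applies.

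Next I would feed these forms into the Hecke-relation machinery. The key input is the identity derived from $T_{\ell^2}$ (or $U_\ell$, $V_\ell$) acting on these Eisenstein-eta quotients: restricting to arithmetic progressions $n\equiv -\text{shift}\pmod{24}$ with $\left(\tfrac{-n}{\ell}\right)=1$ kills the ``new'' eigenform contribution at level $576$ and leaves a relation that forces divisibility by $\ell$ on each application, hence by $\ell^m$ after iterating $m$ times, exactly as in the treatment of $\spt(n)$ itself in \cite{MR3228930}. For $j=2,3,4,5$ the only difference from $j=1$ is the precise eta-power and the weight; I would verify the sign/power of $\ell$ bookkeeping that produces the Kronecker-delta shifts $\delta_{5,\ell}$, $\delta_{7,\ell}$ in the exponents. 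These shifts should come from two sources: (i) a small loss of $\ell$-integrality in the Eisenstein series $E_{2j}$ at $\ell=5,7$ (von Staudt--Clausen, i.e., $\ell$ divides a Bernoulli denominator/numerator at these small levels), giving the $-\delta_{5,\ell}$ loss for $\spt_3$ and the $-\delta_{7,\ell}$ loss for $\spt_4$; and (ii) an extra gain of $\ell$ coming from a vanishing constant term or an extra factor of $\ell$ in the eta-quotient expansion for those same small primes, giving the $+\delta_{5,\ell}$, $+\delta_{7,\ell}$ contributions for $\spt_4$ and $\spt_5$. I would track these by computing the relevant forms modulo $\ell$ explicitly for $\ell=5$ and $\ell=7$, where the spaces are low-dimensional.

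The main obstacle I anticipate is not the Hecke identity itself (which is granted by the earlier sections) but obtaining the correct, uniform-in-$m$ exponent of $\ell$. Two delicate points: first, expressing $\spt_j$ as an \emph{exact} combination of the paper's Eisenstein-eta family rather than merely congruent to one — this requires identifying the quasimodular corrections (powers of $E_2$) and showing, via the standard $E_2$-to-$E_2^*$ completion, that they either cancel on the relevant progression or contribute a form already covered by the machinery. Second, the $\ell=5,7$ anomalies: here I must be careful that the base case $m=1$ really holds with the stated exponent (e.g.\ $\spt_3\bigl(\tfrac{\ell^2 n+1}{24}\bigr)\equiv 0\pmod{\ell^{1-\delta_{5,\ell}}}$, i.e.\ \emph{no} congruence claimed for $\ell=5$ at $m=1$), and that the inductive step gains exactly one power of $\ell$ with no further loss. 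I would handle this by separating the argument into the generic case $\ell\ge 11$ (clean induction, exponent $m$) and a finite check for $\ell=5,7$ using explicit bases of the small cusp-form spaces, the Sturm bound guaranteeing the finitely many verified coefficients suffice.
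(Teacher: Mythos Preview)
Your overall architecture---write $\spt_j$ in terms of explicit modular objects, then apply the Hecke identities---matches the paper, but there is a genuine gap in what you identify as the ``correction term'' and in your explanation of the $\delta_{5,\ell},\delta_{7,\ell}$ shifts.

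The crank moments $M_{2j}$ (hence $\mu_{2j}$) are indeed quasimodular, so they expand purely in terms of the $a_{k,1}$ and polynomials in $n$ times $p(n)$; the $E_2$ contributions become $n^r p(n)$ terms and cause no trouble. But the rank moments $N_{2j}$ (hence $\eta_{2j}$) are \emph{not} quasimodular: the relevant fact (Wang--Yang, Lemma~8.5) is that $q^{-1/24}\eta R_{2j}\in\C[E_2,E_4,E_6]\oplus\bigoplus_i\Q\,\Theta^i(R_2)$, so every $\spt_j=\mu_{2j}-\eta_{2j}$ carries an irreducible $N_2$ contribution. This is not handled by the $E_2\to E_2^*$ completion you invoke; it requires the harmonic Maass form $H\in H_{3/2}(1,\nu_\eta^{-1})$ attached to $\spt$ and the Ahlgren--Kim Hecke relation (the paper's Corollary~3.3/3.4), which yields $N_2\bigl(\tfrac{\ell^{2m}n+1}{24}\bigr)=\bigl(\tfrac{1}{12}+\tfrac{\ell^{2m}n}{4}\bigr)p\bigl(\tfrac{\ell^{2m}n+1}{24}\bigr)+\tfrac{\ell^m}{6}h_{\ell^{2m}}(n)$ when $\bigl(\tfrac{-n}{\ell}\bigr)=1$. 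Without isolating this $N_2$ term and the associated coefficient $h_{\ell^{2m}}(n)$, you cannot close the argument.

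Consequently your diagnosis of the $\delta$-shifts is off. They do not come from von~Staudt--Clausen denominators in $E_{2j}$: after substitution the $a_{k,1}(\ell^{2m}n)$ terms are all $\equiv 0\pmod{\ell^{5m}}$ by the paper's Theorem~1.3, so they are never the bottleneck. The controlling term is always the lowest-order piece of the $h_{\ell^{2m}}(n)$ contribution, namely a rational constant times $\ell^m h_{\ell^{2m}}(n)$. For $\spt_2$ that constant is $\tfrac{1}{288}$ (no $5,7$ issue; the stray $5$ in another denominator is rescued by the Ramanujan congruence $p\equiv 0\pmod{5^{2m}}$); for $\spt_3$ it is $-\tfrac{1}{2560}$ (one $5$ in the denominator, hence $-\delta_{5,\ell}$); for $\spt_4$ it is $\tfrac{5}{86016}$ (a $5$ in the numerator and a $7$ in the denominator, hence $+\delta_{5,\ell}-\delta_{7,\ell}$); for $\spt_5$ it is $-\tfrac{35}{3538944}$ (both $5$ and $7$ in the numerator). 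No Sturm-bound finite check is needed or helpful here---the argument is a uniform rational-coefficient bookkeeping valid for all $m$, and a case-by-case verification at $m=1$ would not propagate.
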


This is sharp in at least some cases; for example,
\begin{gather*}
\spt_3 \left(\frac{5^2 \cdot 119+1}{24} \right)\equiv 1\pmod{5},\\
\spt_4 \left(\frac{7^2 \cdot 47+1}{24} \right)\equiv 4\pmod{7}.
\end{gather*}
In order to prove this theorem we use Hecke operators acting on spaces of half-integral weight weakly holomorphic modular forms, strengthening and generalizing some of the results of \cite{MR4490457}. Recall that a function $ f:\H\to\C $ is a weakly holomorphic modular form of level $ 1 $, weight $ \lambda+1/2 $, and multiplier $ \nu $ if it is holomorphic on the complex upper half plane $ \H $, its poles are supported at the cusp $ \infty $, and it satisfies
\begin{align*}
  f \left(\gamma z\right) = \nu(\gamma)(cz+d)^{\lambda+1/2}f(z)
  \end{align*}
  for all $ \gamma=\abcd\in \slz. $ Here $ \abcd z = \frac{az+b}{cz+d}  $ is the usual M\"{o}bius fractional linear transformation. We denote the vector space of weakly holomorphic modular forms of level $ 1 $ and multiplier $ \nu $ by $ M^!_{\lambda+1/2}(1,\nu) $ or $ M^!_{\lambda+1/2}(\nu). $\par 

A particular family of multipliers is given by $ \nu_\eta^{-s}, $ where $ (s,2)=1 $, and $ \nu_\eta $ is the $ \eta $ multiplier defined, for example, in \cite[Theorem 5.8.1]{MR3675870}. By definition $ \nu_\eta(\gamma)^{24}=1 $ for all $ \gamma\in \slz. $ When $ S_{\lambda+1/2}(\nu_\eta^{-s})=\{0\}$, modular forms in $ M^!_{\lambda+1/2}(\nu_\eta^{-s}) $ are uniquely determined by their principal parts. \par 

Define
\begin{align*}
Z := \left\{(k,s): k\in \{0,4,6,8,10,14\},\,0<s<24,\,(s,2)=1\right\}.
\end{align*}
Let $ E_k $ be the Eisenstein series of weight $ k, $ with $ E_0(z):=1. $ If $ (k,s)\in Z $ then $ S_{k-s/2}(\nu_\eta^{-s}) =\{0\}.$ Define $ a_{k,s}(n) $ for $ (k,s )\in Z$ by 
\begin{align}
g_{k,s}(z):=\frac{E_k(z)}{\eta(z)^{s}}=\sum_{n\equiv -s\pmod{24}} a_{k,s}(n)q^{n/24}\in M^!_{k-s/2}(\nu_\eta^{-s})\cap \Z(\!(q^{1/24})\!).
\end{align}
Here $ q=e^{2\pi i z} $ for $ z\in \H. $ The next two theorems include some results of \cite{MR4490457} as special cases; see Corollary \ref{cor:wy-analogue} below. The following result is a specialization of Theorem \ref{thm: general-coeff-cong}.

\begin{theorem}\label{thm: main-ak-cong}
  Let $ (k,s)\in Z $ and $ \ell\geq 5 $ be prime. If $ 2k>s+2 $ then for $ m\geq 1 $ we have 
    \begin{gather}\label{eqn: coeff-cong}
  a_{k,s}(\ell^2n)\equiv \ell^{k-\frac{s+3}{2} }\left(\frac{12}{\ell} \right)\left(\frac{-1}{\ell} \right)^{\frac{s+1}{2} }\left[\left(\frac{-s}{\ell} \right)-\left(\frac{n}{\ell} \right)\right]a_{k,s}(n)\pmod{\ell^{2k-s-2}},\\
    a_{k,s}(\ell^{2m+2}n)\equiv \ell^{k-\frac{s+3}{2} }\left(\frac{-1}{\ell} \right)^{\frac{s+1}{2} }\left(\frac{-12s}{\ell} \right)a_{k,s}(\ell^{2m}n)\pmod{\ell^{(2k-s-2)m}}.\label{eqn: coeff-cong2}
  \end{gather}
  Moreover, in the case $ (s,6)=3 $, if $ 3\mid n $ then 
  \begin{gather}\label{eqn: coeff-cong3}
    a_{k,s}(9n)\equiv 3^{k-\frac{s+3}{2} }\left(\frac{-1}{3} \right)^{\frac{s+1}{2} }\left[\left(\frac{-s/3}{3} \right)-\left(\frac{n/3}{3} \right)\right]a_{k,s}(n)\pmod{3^{2k-s-2}}.
  \end{gather}
  If $ (s,6)=3 $ and $ m\geq 1 $ then
  \begin{gather} 
    a_{k,s}(3^{2m+2}n)\equiv 3^{k-\frac{s+3}{2} }\left(\frac{-1}{3} \right)^{\frac{s+1}{2} }\left(\frac{-s/3}{3} \right)a_{k,s}(3^{2m}n)\pmod{3^{(2k-s-2)m}}.
  \end{gather}
  
  \end{theorem}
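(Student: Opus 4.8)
The plan is to realize the congruences in Theorem~\ref{thm: main-ak-cong} as consequences of an honest identity in the space $M^!_{k-s/2}(\nu_\eta^{-s})$, exploiting that this space has trivial cusp subspace and hence every weakly holomorphic form is pinned down by its principal part. Concretely, one should apply a suitable index-$\ell^2$ half-integral weight Hecke operator $T_{\ell^2}$ (in the weight $k-s/2$, multiplier $\nu_\eta^{-s}$ normalization) to $g_{k,s}$. The image $g_{k,s}\mid T_{\ell^2}$ again lies in $M^!_{k-s/2}(\nu_\eta^{-s})$, and its principal part is an explicit $\Z[1/\ell]$-combination of the principal part of $g_{k,s}$ (indices scale by $\ell^2$, $1$, and $1/\ell^2$ under $T_{\ell^2}$, but only the polar $q$-powers with nonnegative-after-scaling exponents survive in the principal part). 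Since $S_{k-s/2}(\nu_\eta^{-s})=\{0\}$, matching principal parts forces an exact identity of $q$-expansions; reading off the coefficient of $q^{n/24}$ gives a relation among $a_{k,s}(\ell^2 n)$, $a_{k,s}(n)$, and $a_{k,s}(n/\ell^2)$, with the Hecke eigenvalue-type factors $\ell^{\text{something}}$, the quadratic-residue symbols $\left(\frac{\cdot}{\ell}\right)$, and the $\nu_\eta$-induced twists appearing as coefficients. I would extract the stated power of $\ell$ from the weight (the $\ell^{k-(s+3)/2}$ comes from $\ell^{\lambda-1}$ with $\lambda+1/2 = k-s/2$, i.e. $\lambda = k-(s+1)/2$) and the genus characters from the classical formula for $T_{\ell^2}$ acting on forms of weight $\lambda+1/2$.

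Having the exact identity, the congruences follow by a $\ell$-adic valuation bookkeeping argument. The key arithmetic input is that $g_{k,s}$ has $\ell$-integral coefficients (it lies in $\Z(\!(q^{1/24})\!)$), so one can reduce the identity modulo $\ell^{2k-s-2}$; the term carrying $a_{k,s}(n/\ell^2)$ must be shown to be divisible by $\ell^{2k-s-2}$ — this is exactly where the hypothesis $2k>s+2$ enters, guaranteeing the exponent $2k-s-2$ is positive and that the ``$U_{\ell^2}$ then $V_{\ell^2}$'' cross term is sufficiently divisible. That yields \eqref{eqn: coeff-cong}. For \eqref{eqn: coeff-cong2} I would iterate: apply the $n\mapsto \ell^{2m}n$ version of the identity, note that the ``residue'' symbol $\left(\frac{n}{\ell}\right)$ picks up a factor $\left(\frac{\ell^{2m}}{\ell}\right)=0$ so the second bracketed term drops, leaving a clean multiplicative recursion $a_{k,s}(\ell^{2m+2}n)\equiv (\text{unit})\cdot\ell^{k-(s+3)/2} a_{k,s}(\ell^{2m}n)$, and then compose $m$ of these to get the modulus $\ell^{(2k-s-2)m}$ — each step contributes an extra factor of $\ell^{2k-s-2}$ to the error because the dropped cross term at level $\ell^{2m}n$ is itself divisible by the accumulated power.

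For the $(s,6)=3$ statements \eqref{eqn: coeff-cong3} and the $3$-adic recursion, the same Hecke-at-$\ell=3$ identity is used, but now one needs the prime $3$ to be treated on the same footing as $\ell\geq 5$; the condition $(s,6)=3$ (so $3\mid s$, i.e. $\nu_\eta^{-s}$ is a cube and behaves well at $3$) is what makes $T_{9}$ available and the coefficient formulas specialize correctly, and the extra hypothesis $3\mid n$ ensures the relevant shifted argument $n/3$ or $9n$ stays in the support class $\equiv -s\pmod{24}$. I would simply specialize the general identity at the prime $3$ and run the same valuation argument; the quadratic symbols become $\left(\frac{\cdot}{3}\right)$ and $s/3$, $n/3$ replace $s$, $n$ in the obvious way.

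The main obstacle I anticipate is not the valuation bookkeeping but pinning down the \emph{exact} Hecke identity with the correct normalization: getting the half-integral weight $T_{\ell^2}$ formula right in the presence of the nontrivial multiplier $\nu_\eta^{-s}$ (which is a $24$th root of unity character), so that the constants $\left(\frac{12}{\ell}\right)$, $\left(\frac{-1}{\ell}\right)^{(s+1)/2}$, and $\left(\frac{-s}{\ell}\right)$ come out precisely as stated rather than up to an unspecified sign or root of unity. This is presumably handled by the more general Theorem~\ref{thm: general-coeff-cong} that this result specializes from, so in the write-up I would reduce to that theorem and then verify that for $(k,s)\in Z$ with $2k>s+2$ the hypotheses hold and the constants match, rather than re-deriving the Hecke action from scratch.
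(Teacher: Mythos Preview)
Your plan is the paper's plan: apply $T(\ell^2)$ to $g_{k,s}$, exploit $S_{k-s/2}(\nu_\eta^{-s})=\{0\}$ to pin down the image by its principal part, and reduce the resulting exact identity modulo powers of $\ell$. Two places where the sketch drifts from what actually has to happen:

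First, the principal part of $g_{k,s}\mid T(\ell^2)$ is \emph{not} supported only on $q^{-s/24}$: the ``$a(n/\ell^2)$'' piece of the Hecke formula contributes $\ell^{2k-s-2}q^{-\ell^2 s/24}$, so the identity one obtains is
\[
g_{k,s}\mid T(\ell^2)\;=\;\ell^{2k-s-2}f_{\ell^2 s,k,s}\;+\;\ell^{k-\frac{s+3}{2}}\pfrac{-1}{\ell}^{\frac{s+1}{2}}\pfrac{-12s}{\ell}g_{k,s},
\]
which introduces the new form $f_{\ell^2 s,k,s}$. The integrality input you need for \eqref{eqn: coeff-cong} is then $f_{\ell^2 s,k,s}\in\Z(\!(q^{1/24})\!)$, not merely integrality of $g_{k,s}$.

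Second, and more substantively, your iteration for \eqref{eqn: coeff-cong2} does not close as written. Substituting $n\mapsto\ell^{2m}n$ in the exact identity above gives
\[
a_{k,s}(\ell^{2m+2}n)-\ell^{k-\frac{s+3}{2}}\pfrac{-1}{\ell}^{\frac{s+1}{2}}\pfrac{-12s}{\ell}a_{k,s}(\ell^{2m}n)\;=\;\ell^{2k-s-2}\bigl[b_{\ell^2 s,k,s}(\ell^{2m}n)-a_{k,s}(\ell^{2m-2}n)\bigr],
\]
and to conclude $\equiv 0\pmod{\ell^{(2k-s-2)m}}$ you need the bracket on the right to be $\equiv 0\pmod{\ell^{(2k-s-2)(m-1)}}$. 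But that bracket involves $b_{\ell^2 s,k,s}$, not $a_{k,s}=b_{s,k,s}$, so the induction cannot be run on $g_{k,s}$ alone; ``the dropped cross term is itself divisible by the accumulated power'' is exactly the statement to be proved. The paper handles this by building the whole tower $F_{D}^{(m)}$ from iterated Hecke operators (Theorem~\ref{thm:tech-main}), proving $F_D^{(m)}=\ell^{(2k-s-2)m}f_{\ell^{2m}D,k,s}$ by induction on principal parts, and then invoking a telescoping relation (Lemma~\ref{lem:tech-1}) that expresses $c_D^{(m)}(\ell^2 n)-\ell^{2k-s-2}c_D^{(m-1)}(n)$ directly as $b_D(\ell^{2m+2}n)-(\text{const})\,b_D(\ell^{2m}n)$. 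Your closing remark that you would ``reduce to Theorem~\ref{thm: general-coeff-cong}'' is therefore the correct move; the informal iteration paragraph preceding it is not a self-contained argument.
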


  For $ (k,s)\in Z $ and $ D\equiv s\pmod{24}, $ $ D>0 $, define the unique weakly holomorphic modular form of weight $ k-s/2 $ with principal part $ q^{-D/24} $:  
\begin{align}
f_{D,k,s}(z):=\sum_{n\equiv -s\pmod{24}}b_{D,k,s}(n)q^{n/24}=q^{-D/24}+O(1)\in M_{k-s/2}^!(\nu_\eta^{-s})\cap \Z(\!(q^{1/24})\!).
\end{align}
Note, in particular, that $ g_{k,s} = f_{s,k,s} $.
\begin{theorem}\label{thm: main-coeff-cong-ak-bd}
  Let $ (k,s)\in Z $ and $ \ell\geq 5 $ be prime. Let $ m\geq 0$. If either $ \left(\frac{-ns}{\ell} \right)=1 $, or $ \ell\mid\mid n $ and $ \ell\mid\mid s$, then
  \begin{align}
  a_{k,s}(\ell^{2m}n)=\ell^{(2k-s-2)m}b_{\ell^{2m}s,k,s}(n).
  \end{align}
  
  \end{theorem}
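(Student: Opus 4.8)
The plan is to run the argument through the index-$\ell^{2}$ Hecke operator $T_{\ell^{2}}$ on $M^{!}_{k-s/2}(\nu_\eta^{-s})$ — the central object of the preceding sections — exploiting that this space is rigid: since $S_{k-s/2}(\nu_\eta^{-s})=\{0\}$ for $(k,s)\in Z$, a form in $M^{!}_{k-s/2}(\nu_\eta^{-s})$ is determined by its principal part. Write $\lambda:=k-\tfrac{s+1}{2}\in\Z$, so $2\lambda-1=2k-s-2$, let $\psi$ be the character mod $\ell$ governing the $T_{\ell^{2}}$-action (from \eqref{eqn: coeff-cong} one reads $\psi(t)=\left(\frac{12}{\ell}\right)\left(\frac{-1}{\ell}\right)^{(s+1)/2}\left(\frac{t}{\ell}\right)$, so $\psi(\ell^{2}t)=0$), and decompose $T_{\ell^{2}}=U_{\ell^{2}}+\ell^{\lambda-1}\psi+\ell^{2\lambda-1}V_{\ell^{2}}$ on $q$-expansions, where $U_{\ell^{2}}$ sends the coefficient of $q^{n/24}$ to that of $q^{\ell^{2}n/24}$, $V_{\ell^{2}}$ sends $q^{n/24}$ to $q^{\ell^{2}n/24}$, and $\psi$ is coefficientwise multiplication by $\psi(n)$. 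First I would record two Hecke identities, obtained by a short principal-part computation: since $0<s<24\le\ell^{2}$, the only surviving negative-index terms of $g_{k,s}|T_{\ell^{2}}$ and of $f_{\ell^{2j}s,k,s}|T_{\ell^{2}}$ are the expected ones, so uniqueness (and $f_{s,k,s}=g_{k,s}$) gives
\[
g_{k,s}|T_{\ell^{2}}=\ell^{2\lambda-1}f_{\ell^{2}s,k,s}+\psi(-s)\,\ell^{\lambda-1}g_{k,s},\qquad f_{\ell^{2j}s,k,s}|T_{\ell^{2}}=\ell^{2\lambda-1}f_{\ell^{2j+2}s,k,s}+f_{\ell^{2j-2}s,k,s}\quad(j\ge 1),
\]
the middle term in the second relation vanishing since $\ell\mid\ell^{2j}s$.

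Reading off the $n$-th coefficient of the first identity, after isolating the $U_{\ell^{2}}$-part, yields
\[
a_{k,s}(\ell^{2}n)=\ell^{2\lambda-1}b_{\ell^{2}s,k,s}(n)+\ell^{\lambda-1}\bigl(\psi(-s)-\psi(n)\bigr)a_{k,s}(n)-\ell^{2\lambda-1}a_{k,s}(n/\ell^{2}).
\]
Call $n$ \emph{good} if $\left(\frac{-ns}{\ell}\right)=1$, or if $\ell\mid\mid n$ and $\ell\mid\mid s$. For a good $n$ both correction terms vanish: in the first case $\psi(-s)=\psi(n)$ (as $\left(\frac{-s}{\ell}\right)=\left(\frac{n}{\ell}\right)$) and $\ell\nmid n$; in the second $\psi(-s)=\psi(n)=0$ (as $\ell\mid s$, $\ell\mid n$) and $\ell^{2}\nmid n$. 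This proves the theorem for $m=1$.

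For general $m$ I would iterate $U_{\ell^{2}}$, the point being that the accumulating errors never land on a good index. Set $\mathcal{E}_{j}:=\psi(-s)f_{\ell^{2j}s,k,s}-f_{\ell^{2j}s,k,s}|\psi$ and $\mathcal{V}_{j}:=f_{\ell^{2j}s,k,s}|V_{\ell^{2}}$ (with $f_{s,k,s}=g_{k,s}$); by the computation just made, the $q^{n/24}$-coefficient of $\mathcal{E}_{j}$ is $\bigl(\psi(-s)-\psi(n)\bigr)b_{\ell^{2j}s,k,s}(n)$ and that of $\mathcal{V}_{j}$ is $b_{\ell^{2j}s,k,s}(n/\ell^{2})$, each vanishing at every good $n$. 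I then claim, by induction on $m$, that $g_{k,s}|U_{\ell^{2}}^{m}=\ell^{m(2\lambda-1)}f_{\ell^{2m}s,k,s}+R_{m}$ with $R_{m}$ a $\Z[\psi(-s)]$-linear combination of $\ell$-powers times the series $\mathcal{E}_{j},\mathcal{V}_{j}$ for $0\le j<m$; reading off the $n$-th coefficient at a good $n$ then gives exactly $a_{k,s}(\ell^{2m}n)=\ell^{(2k-s-2)m}b_{\ell^{2m}s,k,s}(n)$. The inductive step applies $U_{\ell^{2}}$ to the previous identity and uses $(h|\psi)|U_{\ell^{2}}=0$ (since $\psi(\ell^{2}n)=0$) and $(h|V_{\ell^{2}})|U_{\ell^{2}}=h$, together with the two Hecke relations to expand $f_{\ell^{2m}s,k,s}|U_{\ell^{2}}$ and $f_{\ell^{2j}s,k,s}|U_{\ell^{2}}$.

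The main obstacle is this last induction. After applying $U_{\ell^{2}}$ and expanding $f_{\ell^{2m}s,k,s}|U_{\ell^{2}}$ via the second Hecke relation, $R_{m+1}$ a priori also carries genuine $f_{\ell^{2j}s,k,s}$-terms for $0\le j\le m$, and one must check that these all cancel, leaving only $\ell^{(m+1)(2\lambda-1)}f_{\ell^{2m+2}s,k,s}$ plus the $\mathcal{E}_{\bullet},\mathcal{V}_{\bullet}$-part; this cancellation is the Chebyshev-type recursion packaged in the pair of Hecke identities. Carrying it out means either tracking the $\ell$-power coefficients of $R_{m}$ explicitly — I have verified $m\le 3$ directly, where every cross-term collects into a multiple of $\psi(-s)-\psi(n)$ and of coefficients $b_{\ell^{2j}s,k,s}(n/\ell^{2})$ — or, equivalently, finding the right two-index invariant for the coefficients $b_{\ell^{2j}s,k,s}(\ell^{2i}n)$ to induct on. With that in hand the theorem follows as indicated, the ``good index'' hypothesis being visibly the exact condition that forces $\psi(n)=\psi(-s)$ (or kills both) and $\ell^{2}\nmid n$.
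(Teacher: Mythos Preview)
Your approach rests on the same two ingredients as the paper's --- the index-$\ell^2$ Hecke relations (your two displayed identities are the $m=1$ case and the inductive step of the paper's Theorem~\ref{thm:tech-main}) together with principal-part rigidity --- so the strategy is sound. The gap is exactly where you say it is: you iterate $U_{\ell^2}$ and must show the error $R_m$ in $g_{k,s}|U_{\ell^2}^m=\ell^{m(2\lambda-1)}f_{\ell^{2m}s}+R_m$ stays supported on the $\mathcal{E}_j,\mathcal{V}_j$, but each application of $U_{\ell^2}$ reintroduces genuine $f_{\ell^{2j}s}$-terms (via $\mathcal{V}_j|U_{\ell^2}=f_{\ell^{2j}s}$ and $\mathcal{E}_j|U_{\ell^2}=\psi(-s)\,f_{\ell^{2j}s}|U_{\ell^2}$), and you have only checked the cancellation for $m\le 3$.

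The paper's packaging is precisely the ``two-index invariant'' you are looking for. Rather than iterate $U_{\ell^2}$, set
\[
F^{(m)}:=g_{k,s}\big|\bigl(T(\ell^{2m})-\ell^{\lambda-1}\psi(-s)\,T(\ell^{2m-2})\bigr),
\]
where the higher Hecke operators are defined by $T(\ell^{2m+2})=T(\ell^2)T(\ell^{2m})-\ell^{2\lambda-1}T(\ell^{2m-2})$. This gives the clean recurrence $F^{(m)}=F^{(m-1)}|T(\ell^2)-\ell^{2\lambda-1}F^{(m-2)}$, and now your two Hecke identities yield $F^{(m)}=\ell^{m(2\lambda-1)}f_{\ell^{2m}s}$ by a one-line induction on principal parts with \emph{no error term}: the Chebyshev-type cancellation you are chasing is already built into the recursive definition of $T(\ell^{2m})$. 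Extracting the $q^{n/24}$-coefficient of $F^{(m)}$ when $\ell\nmid n$ then telescopes (Lemma~\ref{lem:tech-3}) to $b_{s,k,s}(\ell^{2m}n)$ plus a sum prefaced by $\bigl[1-\bigl(\tfrac{-sn}{\ell}\bigr)\bigr]$, which vanishes when $\bigl(\tfrac{-ns}{\ell}\bigr)=1$; the case $\ell\,\|\,n$, $\ell\,\|\,s$ is the even simpler Lemma~\ref{lem:tech-2}. So your plan is completable, and the missing step is to shift from iterating $U_{\ell^2}$ to iterating the full $T(\ell^2)$ against the twisted sequence $F^{(m)}$.
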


We obtain the following congruences for the coefficients $ a_{k,s}(n) $ from Theorems \ref{thm: main-ak-cong} and \ref{thm: main-coeff-cong-ak-bd}. 
  \begin{corollary}\label{cor:wy-analogue}
  Let $ (k,s)\in Z $ and $ \ell\geq 5 $ be prime. Let $ m\geq 0 $. If $ 2k>s+3 $ then we have
  \begin{gather}\label{eqn:wy-analogue-1.1}
  a_{k,s}(\ell^{2m}n)\equiv 0\pmod{\ell^{k-\frac{s+3}{2}}}.
  \end{gather}
  Let $ 2k>s-2 .$ If either $ \left(\frac{-ns}{\ell} \right)=1 $, or $ \ell\mid\mid n $ and $ \ell\mid\mid s $ then we have
  \begin{gather}\label{eqn:wy-analogue-2.8i}
  a_{k,s}(\ell^{2m}n)\equiv 0\pmod{\ell^{(2k-s-2)m}}.
  \end{gather}
  If $ \ell\nmid n $ and $ m\geq 1 $ we have 
  \begin{gather}\label{eqn:wy-analogue-2.8ii-1}
  a_{k,s}(\ell^{2m+1} n)\equiv \ell^{k-\frac{s+3}{2}}\left(\frac{-1}{\ell} \right)^{\frac{s+1}{2}}\left(\frac{-12D}{\ell} \right)a_{k,s}(\ell^{2m-1} n)\pmod{\ell^{(2k-s-2)m}}.
\end{gather}
      
  \end{corollary}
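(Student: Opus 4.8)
The plan is to read off the three congruences directly from Theorems~\ref{thm: main-ak-cong} and~\ref{thm: main-coeff-cong-ak-bd} by specialization and iteration. The only external inputs are the integrality of the coefficients $a_{k,s}(n)$ and $b_{D,k,s}(n)$ (immediate from $g_{k,s},f_{D,k,s}\in\Z(\!(q^{1/24})\!)$) and the elementary fact that $\ell^2\equiv 1\pmod{24}$ whenever $(\ell,6)=1$, so that $\ell^{2m}s\equiv s\pmod{24}$ and $f_{\ell^{2m}s,k,s}$ is a genuine element of $M^!_{k-s/2}(\nu_\eta^{-s})$ with integer Fourier coefficients.

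First I would treat~\eqref{eqn:wy-analogue-1.1} for $m\ge1$. Since $2k>s+3$ gives in particular $2k>s+2$, \eqref{eqn: coeff-cong} applies with $n$ replaced by $\ell^{2m-2}n$; its right-hand side is an integer multiple of $\ell^{k-\frac{s+3}{2}}$, and since $k-\frac{s+3}{2}\le 2k-s-2$ (which holds because $2k>s+3$), reducing that congruence modulo the smaller power already yields $a_{k,s}(\ell^{2m}n)\equiv 0\pmod{\ell^{k-\frac{s+3}{2}}}$. Next, \eqref{eqn:wy-analogue-2.8i} is immediate from Theorem~\ref{thm: main-coeff-cong-ak-bd}: under either hypothesis it gives $a_{k,s}(\ell^{2m}n)=\ell^{(2k-s-2)m}\,b_{\ell^{2m}s,k,s}(n)$, and since $b_{\ell^{2m}s,k,s}(n)\in\Z$ the stated divisibility follows; the congruence is vacuous when $2k-s-2\le 0$, which is why the weaker hypothesis $2k>s-2$ is enough.

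For~\eqref{eqn:wy-analogue-2.8ii-1} the key observation is that $\ell\nmid n$ together with $m\ge1$ forces $\ell\mid\mid\ell^{2m-1}n$. Substituting $n\mapsto\ell^{2m-1}n$ into~\eqref{eqn: coeff-cong} makes the Legendre symbol $\pfrac{\ell^{2m-1}n}{\ell}$ vanish, so the bracket $\bigl[\pfrac{-s}{\ell}-\pfrac{n}{\ell}\bigr]$ collapses to $\pfrac{-s}{\ell}$; the multiplicativity $\pfrac{12}{\ell}\pfrac{-s}{\ell}=\pfrac{-12s}{\ell}$ then produces the displayed relation (with $D=s$) modulo $\ell^{2k-s-2}$, which is the case $m=1$. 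To reach the stronger modulus $\ell^{(2k-s-2)m}$ for $m\ge2$ I would instead iterate the sharper even-power recursion~\eqref{eqn: coeff-cong2} after the substitution $n\mapsto\ell n$, or equivalently quote the general Theorem~\ref{thm: general-coeff-cong}, of which Theorem~\ref{thm: main-ak-cong} is the specialization. I expect the real difficulty to be bookkeeping rather than ideas: one must check at each stage that the power of $\ell$ being divided out never exceeds the modulus supplied by Theorems~\ref{thm: main-ak-cong}--\ref{thm: main-coeff-cong-ak-bd}, track how the Legendre-symbol and sign factors transform under the substitutions $n\mapsto\ell^{2m-2}n$ and $n\mapsto\ell^{2m-1}n$, and correctly dispose of the degenerate ranges $2k-s-2\le0$ and small $m$ where several of the congruences are content-free.
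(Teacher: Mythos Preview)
Your treatment of~\eqref{eqn:wy-analogue-1.1} (for $m\ge1$) and~\eqref{eqn:wy-analogue-2.8i} is correct and matches what the paper intends: both follow directly from Theorems~\ref{thm: main-ak-cong} and~\ref{thm: main-coeff-cong-ak-bd} via the substitutions you describe.

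There is, however, a genuine gap in your argument for~\eqref{eqn:wy-analogue-2.8ii-1} when $m\ge 2$. Substituting $n\mapsto\ell n$ into~\eqref{eqn: coeff-cong2} relates $a_{k,s}(\ell^{2m+3}n)$ to $a_{k,s}(\ell^{2m+1}n)$ modulo $\ell^{(2k-s-2)m}$; after reindexing $M=m+1$, this relates $a_{k,s}(\ell^{2M+1}n)$ to $a_{k,s}(\ell^{2M-1}n)$ only modulo $\ell^{(2k-s-2)(M-1)}$, a full factor of $\ell^{2k-s-2}$ short of what is claimed. Iteration does not recover the loss, and quoting Theorem~\ref{thm: general-coeff-cong} is no help either, since~\eqref{eqn: bd-recur-l2m} has exactly the same shape as~\eqref{eqn: coeff-cong2}. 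The paper's actual route to~\eqref{eqn:wy-analogue-2.8ii-1} is the corollary immediately following Lemma~\ref{lem:tech-2} in Section~\ref{sec:hecke-reln-eis-eta}: when $\ell\mid\mid n$ one has the exact identity
\[
c^{(m)}_{D,k,s,\ell}(n)=b_{D,k,s}(\ell^{2m}n)-\ell^{k-\frac{s+3}{2}}\pfrac{-1}{\ell}^{\frac{s+1}{2}}\pfrac{-12D}{\ell}b_{D,k,s}(\ell^{2m-2}n),
\]
and since $c^{(m)}_{D,k,s,\ell}(n)\equiv 0\pmod{\ell^{(2k-s-2)m}}$ by Theorem~\ref{thm:tech-main}, substituting $n\mapsto\ell n$ with $\ell\nmid n$ and specializing $D=s$ gives~\eqref{eqn:wy-analogue-2.8ii-1} with the full modulus. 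The missing ingredient is thus precisely the $\ell\mid\mid n$ refinement in Lemma~\ref{lem:tech-2}, which is strictly sharper than anything Theorem~\ref{thm: main-ak-cong} by itself records.
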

  
As in Theorem \ref{thm: main-ak-cong}, there are analagous results for $ \ell=3 $ when $ (s,6)=3. $
Equation \eqref{eqn:wy-analogue-1.1} gives \cite[Theorem 1.1]{MR4490457} in the special case $ s=1; $ \eqref{eqn:wy-analogue-2.8i} gives \cite[Theorem 2.8 (1)]{MR4490457} when $ s=1 $ and $ m=1,2 $; and \eqref{eqn:wy-analogue-2.8ii-1} gives \cite[Theorem 2.8 (2)]{MR4490457} when $ s=1 $ and $ m=1,2 $ (correcting a typographical error in that statement). Note that $ a_{k,1}  $ corresponds to $ e_{k} $ in the notation of \cite{MR4490457}, as $ a_{k,1}(n)=e_k \left(\frac{n+1}{24} \right) $ for $ k\in \left\{4,6,8,10,14\right\}. $\par 

Our aim throughout is to obtain results uniform in $ \ell; $ consequently some congruences may be improved for small primes. Computational evidence suggests the congruence \eqref{eqn:wy-analogue-2.8i} may be suboptimal for small $ \ell$ for some $ k,s $. There are examples showing the congruence is sharp for larger $ \ell; $ for instance $ a_{4,1}(29^2\cdot 23)\not\equiv 0\pmod{29^6}. $ \par   

Theorems \ref{thm: main-ak-cong} and \ref{thm: main-coeff-cong-ak-bd} allow us to prove congruences for partition statistics including the higher-order smallest parts functions. An alternative combinatorial proof of the Ramanujan congruences \eqref{eqn:ram-cong} modulo $ 5 $ and $ 7 $ was given by Dyson \cite{MR3077150}, wherein the concept of the rank of a partition was introduced. The rank of a partition $ \pi\vdash n $ is $ \max\pi-\ell(\pi) $, where $ \ell(\pi) $ is the number of parts of $ \pi $. Dyson also conjectured the existence of a partition statistic he called the crank, which would prove the congruences modulo $ 11 $ combinatorially. This statistic was found by Andrews and Garvan \cite{MR0929094}.\par 

Let $ N(m,n) $ be the number of partitions of $ n $ with rank $ m $ and $ M(m,n) $ be the number of partitions of $ n $ with crank $ m. $ Define the $ j $th moment of ranks and cranks, respectively, by
\begin{align}
N_j(n):=\sum_k k^j N(k,n),\\
M_j(n):=\sum_k k^j M(k,n).
\end{align}
The only nonzero moments occur for even $ j $; this is because $ N(k,n)=-N(-k,n) $ and similarly for $ M. $ We will also need the symmetrized moments of ranks and cranks. These are linear combinations of the $ N_{2j} $ and $ M_{2j} $; the $ 2j $th symmetrized crank and rank moments are, respectively,
\begin{gather}\label{eqn:mu-defn}
  \mu_{2j}(n):=\sum_{m}\binom{m+\lfloor \frac{2j-1}{2} \rfloor}{2j}M(m,n),\\
  \eta_{2j}(n):=\sum_{m}\binom{m+\lfloor \frac{2j-1}{2} \rfloor}{2j}N(m,n).\label{eqn:eta-defn}
\end{gather}
Andrews \cite{MR2308850} introduced the $ \eta_{2j} $ function. Garvan later introduced the $ \mu_{2j} $ and $ \spt_j $ functions and proved \cite[Theorem 5.6]{MR2822233} that
\begin{align}\label{eqn:spt-defn}
\spt_j(n)=\mu_{2j}(n)-\eta_{2j}(n).
\end{align}

In \cite[(4.8)]{MR2035811} and \cite[Lemma 8.5]{MR4490457} it was shown that the generating functions for $ M_{2j},\, N_{2j} $ can be written as linear combinations of modular forms and their derivatives. From this Wang and Yang found identities such as
\begin{gather*}
M_4(n) = \frac{1}{20}a_{4,1}(24n-1)-\frac{1}{20} p(n)+2np(n)-12n^2p(n),\\
N_4(n) = \frac{2}{15} a_{4,1}(24n-1)-\frac{2}{15} p(n) + 4np(n) -36n^2p(n)+N_2(n)-12nN_2(n).
\end{gather*}
See \cite[(8.24),(8.25),(8.29),(8.30)]{MR4490457}.
From these identities, together with Theorems \ref{thm: main-ak-cong} and \ref{thm: main-coeff-cong-ak-bd}, we can prove the following relations for moments of cranks.

\begin{theorem}\label{thm: crank-congruences}
For $ \ell\geq 5 $ prime and $ \left(\frac{-n}{\ell} \right)=1 $ we have
\begin{gather}
240M_4 \left(\frac{\ell^{2m}n+1}{24} \right) \equiv \left(3+10\ell^{2m}n-5\ell^{4m}n^2\right)p \left(\frac{\ell^{2m}n+1}{24} \right)\pmod{\ell^{5m}},\label{eqn:cong-M4}\\
12096 M_6 \left(\frac{\ell^{2m}n+1}{24} \right)\equiv\left(27+189\ell^{2m}n-315\ell^{4m}n^2 \right)p \left(\frac{\ell^{2m}n+1}{24} \right)\pmod{\ell^{5m}},\label{eqn:cong-M6}\\
5760\mu_4\left(\frac{\ell^{2m}n+1}{24} \right)\equiv \left(-17-10\ell^{2m}n-5\ell^{4m}n^2\right)p\left(\frac{\ell^{2m}n+1}{24} \right)\pmod{\ell^{5m}},\label{eqn:cong-mu4}\\
967680\mu_6 \left(\frac{\ell^{2m}n+1}{24} \right) \equiv (367+189\ell^{2m}n+105\ell^{4m}n^2)p\left(\frac{\ell^{2m}n+1}{24} \right)\pmod{\ell^{5m}}.\label{eqn: cong-mu6}
\end{gather}   
\end{theorem}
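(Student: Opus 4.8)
The argument runs through two inputs. The first is the $s=1$ instance of \eqref{eqn:wy-analogue-2.8i} in Corollary~\ref{cor:wy-analogue}: for $\ell\geq 5$ prime, $\left(\frac{-n}{\ell}\right)=1$, $m\geq 0$, and $k\in\{4,6\}$,
\[
a_{k,1}(\ell^{2m}n)\equiv 0\pmod{\ell^{(2k-3)m}},
\]
so that, since $2k-3\geq 5$ in both cases, $a_{k,1}(\ell^{2m}n)\equiv 0\pmod{\ell^{5m}}$ — and this persists after multiplication by any element of $\Z_{(\ell)}$. The second is the pair of identities \cite[(8.24)]{MR4490457} and \cite[(8.29)]{MR4490457} of Wang and Yang, which express $M_4(N)$ and $M_6(N)$ as $\Q[N]$-linear combinations of $p(N)$, $a_{4,1}(24N-1)$ and $a_{6,1}(24N-1)$, supplemented by the classical identity $M_2(N)=2Np(N)$ (itself a consequence of \eqref{eqn:spt-defn} for $j=1$, the case $j=1$ of \eqref{eqn:mu-defn} and \eqref{eqn:eta-defn}, and $\spt(n)=np(n)-\tfrac12 N_2(n)$).

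For $M_4$ I would substitute $N=\frac{\ell^{2m}n+1}{24}$ into \cite[(8.24)]{MR4490457} — so that $24N-1=\ell^{2m}n$ — and multiply by $240$, obtaining the exact identity
\[
240\,M_4\!\left(\tfrac{\ell^{2m}n+1}{24}\right)=12\,a_{4,1}(\ell^{2m}n)+\left(3+10\ell^{2m}n-5\ell^{4m}n^2\right)p\!\left(\tfrac{\ell^{2m}n+1}{24}\right);
\]
the constant $240$ is precisely what makes the coefficient of $p$ an integer polynomial in $\ell^{2m}n$. Reducing mod $\ell^{5m}$ and using $a_{4,1}(\ell^{2m}n)\equiv 0\pmod{\ell^{5m}}$ gives \eqref{eqn:cong-M4}. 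The case $M_6$ is identical in form: after the same substitution into \cite[(8.29)]{MR4490457}, scaled by $12096$, one expresses $12096\,M_6\!\left(\tfrac{\ell^{2m}n+1}{24}\right)$ as $\left(27+189\ell^{2m}n-315\ell^{4m}n^2\right)p\!\left(\tfrac{\ell^{2m}n+1}{24}\right)$ plus a $\Z_{(\ell)}[\ell^{2m}n]$-combination of $a_{4,1}(\ell^{2m}n)$ and $a_{6,1}(\ell^{2m}n)$; as $a_{4,1}(\ell^{2m}n)\equiv 0\pmod{\ell^{5m}}$ and $a_{6,1}(\ell^{2m}n)\equiv 0\pmod{\ell^{9m}}$, all these terms vanish mod $\ell^{5m}$, yielding \eqref{eqn:cong-M6}.

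For the symmetrized moments I would expand the binomial coefficient in \eqref{eqn:mu-defn} and use the symmetry $M(m,n)=M(-m,n)$ to discard the odd-degree terms, getting the elementary relations
\[
\mu_4(n)=\tfrac{1}{24}\bigl(M_4(n)-M_2(n)\bigr),\qquad \mu_6(n)=\tfrac{1}{720}\bigl(M_6(n)-5M_4(n)+4M_2(n)\bigr).
\]
Substituting $n\mapsto\frac{\ell^{2m}n+1}{24}$, multiplying \eqref{eqn:cong-mu4} by $5760=24\cdot 240$ and \eqref{eqn: cong-mu6} by $967680=720\cdot 1344$, and feeding in the two identities above together with $M_2\!\left(\tfrac{\ell^{2m}n+1}{24}\right)=\tfrac{\ell^{2m}n+1}{12}\,p\!\left(\tfrac{\ell^{2m}n+1}{24}\right)$, one once more arrives at $p\!\left(\tfrac{\ell^{2m}n+1}{24}\right)$ times a polynomial in $\ell^{2m}n$ with integer coefficients, plus a $\Z_{(\ell)}$-linear combination of $a_{4,1}(\ell^{2m}n)$ and $a_{6,1}(\ell^{2m}n)$ (the only denominator needing $\ell\geq 5$ is a harmless power of $3$ arising in the $\mu_6$ reduction). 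The $a_{k,1}$ terms vanish mod $\ell^{5m}$ as before, and collecting the $p$-coefficient gives $-17-10\ell^{2m}n-5\ell^{4m}n^2$ for $\mu_4$ and $367+189\ell^{2m}n+105\ell^{4m}n^2$ for $\mu_6$, which are \eqref{eqn:cong-mu4} and \eqref{eqn: cong-mu6}.

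The one step with genuine content is verifying that, after the substitution, the scaling, and (for the symmetrized moments) the recombination, the coefficient of $p$ comes out exactly as asserted in each of the four congruences; this is a bounded computation once \cite[(8.24)]{MR4490457} and \cite[(8.29)]{MR4490457} are spelled out, and is where I expect the only real work to lie. Everything else is automatic: the multipliers $240$, $12096$, $5760$, $967680$ clear all denominators — those intrinsic to the Wang--Yang identities as well as those produced by the substitution and the recombination — up to a power of $3$ that is a unit in $\Z_{(\ell)}$ for $\ell\geq 5$, and the entire arithmetic force of the theorem comes from the single divisibility $a_{k,1}(\ell^{2m}n)\equiv 0\pmod{\ell^{(2k-3)m}}$ of Corollary~\ref{cor:wy-analogue}.
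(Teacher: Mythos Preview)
Your approach is correct and essentially the same as the paper's: substitute $n\mapsto\frac{\ell^{2m}n+1}{24}$ into the Wang--Yang identities, clear denominators, and invoke the vanishing of $a_{k,1}(\ell^{2m}n)$ modulo $\ell^{5m}$ (the paper cites Theorem~\ref{thm: main-coeff-cong-ak-bd} directly rather than its corollary, but the content is identical). Two small points: the $M_6$ identity is \cite[(8.25)]{MR4490457}, not (8.29) (which is the $N_4$ identity); and for the symmetrized moments the paper simply quotes the ready-made formulas \cite[(8.37),(8.38)]{MR4490457} rather than rederiving $\mu_{4},\mu_{6}$ from the ordinary moments via the binomial expansion as you do---your route is slightly more self-contained, the paper's is slightly shorter, and they are of course equivalent. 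Note also that after scaling by $12096$ the $p$-coefficient in the $M_6$ case is actually $27+189\ell^{2m}n-315\ell^{4m}n^2+35\ell^{6m}n^3$; the cubic term vanishes modulo $\ell^{5m}$, which you should state explicitly rather than folding it silently into the quadratic polynomial.
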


Similar congruences modulo $ \ell^{2m} $ can be deduced from \cite[\S 8]{MR4490457}.\par

We also obtain relations for moments of ranks. These require an additional identity for the $ N_2 $ function, which we obtain using a result of Ahlgren and Kim \cite[Cor.~3.2]{MR3228930}. These results imply, for example, that for $ \ell\geq5 $ prime, $ m\geq 1 $, and $ \left(\frac{-n}{\ell} \right)=1 $, we have
\begin{align}\label{eqn: cong-rank-moments}
12N_2 \left(\frac{\ell^{2m}n+1}{24} \right)\equiv 80 N_4\left(\frac{\ell^{2m}n+1}{24} \right)\equiv 448 N_6\left(\frac{\ell^{2m}n+1}{24} \right)\equiv p\left(\frac{\ell^{2m}n+1}{24} \right)\pmod{\ell^m},
\end{align}
see Theorem \ref{thm:rank-congruences}. Congruences for the $ 8 $th and $ 10 $th moments of ranks and cranks can also be found using our method.\par

The outline of this paper is as follows: in Section~\ref{sec:hecke-reln-eis-eta} we prove generalizations of Theorems~\ref{thm: main-ak-cong} and \ref{thm: main-coeff-cong-ak-bd} by applying Hecke operators to the functions $ f_{D,k,s} $ and identifying principal parts. In Section~\ref{sec: crank-moment} we apply the congruences of $ a_{k,1} $ to the identities of crank moments to prove Theorem \ref{thm: crank-congruences}. In Section~\ref{sec: rank-moment} we derive a congruence for $ N_2 $ and apply this, together with the congruences for $ a_{k,1}, $ to derive systematic congruences for moments of ranks. Finally, in Section~\ref{sec: spt-proof} we prove Theorem \ref{thm: spt-main}.

\subsection*{Acknowledgments}
The author thanks Scott Ahlgren for many helpful discussions in the preparation of this paper.

\section{Hecke Relations for Eta Multipliers}\label{sec:hecke-reln-eis-eta}
In \cite{MR2881327} a simple general formula is proved for the action of Hecke operators on the spaces of weight $ \frac{1}{2}  $ modular forms of level $ 4. $ This result explains many congruences for the traces of singular moduli. Here we obtain a similar result for the spaces $ M_{k-s/2}^!(1,\nu_\eta^{-s}) $ for $ (k,s)\in Z. $\par

Let $ (s,2) =1$. Note for $ 0<s<24 $ that $ f\in S_{k-s/2}(1,\nu_\eta^{-s}) $ only if $ \eta^s f\in S_{k}(1) $. Since for $ (k,s)\in Z $ we have $ S_{k}(1)=\{0\}$, see that $ S_{k-s/2}(1,\nu_\eta^{-s})=\{0\} $ for $ (k,s)\in Z. $\par   

Define the principal part of a $ q $-series as
\begin{align*}
\operatorname{p.}\sum_{n\gg -\infty} a(n)q^{n/24} := \sum_{n\leq 0}a(n)q^{n/24}.
\end{align*}
Because $ S_{k-s/2}(\nu_\eta^{-s})=\{0\} $ for $ (k,s)\in Z $ we see that weakly holomorphic modular forms in $ M^!_{k-s/2}(\nu_\eta^{-s}) $ are uniquely determined by their principal parts.

When $ (s,2) =1$ and for primes $ \ell\geq 5 $ we have a Hecke operator $ T(\ell^2):M_{k-s/2}^!(\nu_\eta^{-s})\to M_{k-s/2}^!(\nu_\eta^{-s}) $; its action on coefficients is given by
\begin{align}\label{eqn: hecke-op-defn}
\left(\sum a(n)q^{n/24}\right)\big|T(\ell^2):=\sum \left(a(\ell^2n)+\ell^{k-\frac{s+3}{2} }\left(\frac{-1}{\ell} \right)^{\frac{s+1}{2} }\left(\frac{12n}{\ell} \right)a(n)+\ell^{2k-s-2}a \left(\frac{n}{\ell^2} \right)\right)q^{n/24}.
\end{align}
See, for example, \cite[(3.16)]{AAD2023shimura}, \cite[(2.2)]{MR3228930}, or \cite[Proposition 11]{MR3263525}. The algebra of Hecke operators is commutative. For prime powers the Hecke operator $ T(\ell^{2m}) $, $ m\geq 1, $ is defined recursively by
\begin{align}\label{eqn:recursive-hecke}
  T(\ell^{2m+2}) := T(\ell^{2})T(\ell^{2m})-\ell^{2k-s-2}T(\ell^{2m-2}).
  \end{align}
 \par
 When $ (s,6)=3 $ we define $ T(9) $ on $ M_{k-s/2}^!(\nu_\eta^{-s}) $ \cite[(3.17)]{AAD2023shimura}
\begin{multline}\label{eqn: hecke-9}
\left(\sum a(n)q^{n/24}\right)\big|T(9)  :=\sum\left(a(9n)+3^{k-\frac{s+3}{2} }\left(\frac{-1}{3} \right)^{\frac{s+1}{2} }\left(\frac{n/3}{3} \right)a(n)+3^{2k-s-2}a \left(\frac{n}{9} \right)\right)q^{n/24}.
\end{multline}
Recall that coefficients of modular forms with multiplier $ \nu_\eta^{-s} $ are supported on $ n\equiv -s\pmod{24} $. In most of the following claims we state and prove only the case $ \ell\geq 5, $ however, the same methods extend to $ \ell =3 $ using \eqref{eqn: hecke-9}.\par

For $ (k,s)\in Z $ and $0<D\equiv s\pmod{24} $ let
\begin{align*}
f_{D,k,s}(z) =q^{-D/24}+O(1) = \sum_{n\equiv-s\pmod{24}}b_{D,k,s}(n)q^{n/24}
\end{align*}
be the unique modular form in $ M_{k-s/2}^!(1,\nu_\eta^{-s}) $ with principal part equal to $ q^{-D/24}. $ For $ \ell\geq 5 $ a prime with $ \ell^2\nmid D $ we define \[ F_{D,k,s,\ell}^{(m)}(z)=\sum_{n\equiv-s\pmod{24}}c_{D,k,s,\ell}^{(m)}(n)q^{n/24}\in M_{k-s/2}^!(\nu_\eta^{-s}) \] for $ m\geq 0 $ by
\begin{gather}
  F_{D,k,s,\ell}^{(0)}(z):=f_{D,k,s}(z),\\
  F_{D,k,s,\ell}^{(m)}(z)
  :=f_{D,k,s}(z)\Big| \left(T(\ell^{2m})-\ell^{k-\frac{s+3}{2}}\left(\frac{-1}{\ell} \right)^{\frac{s+1}{2} }\left(\frac{-12D}{\ell} \right)T(\ell^{2m-2})\right),\hspace{1em}m\geq 1.\label{eqn: FDm}
\end{gather}

The foundational result of this section is the following analogue of \cite[Theorem 2]{MR2881327}.

\begin{theorem}\label{thm:tech-main}
 Let $ (k,s)\in Z $, $ 0<D\equiv s\pmod{24} $, and $  \ell\geq5 $ be prime with $ \ell^2\nmid D $. Then for all $ n $ we have
\begin{align}\label{eqn: cDm}
F_{D,k,s,\ell}^{(m)}(z)=\ell^{(2k-s-2)m}f_{\ell^{2m}D,k,s}(z).
\end{align}
    
  \end{theorem}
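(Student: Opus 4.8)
The plan is to use the fact, recalled above, that since $S_{k-s/2}(\nu_\eta^{-s})=\{0\}$ for $(k,s)\in Z$, a form in $M^!_{k-s/2}(\nu_\eta^{-s})$ is determined by its principal part. Since $T(\ell^2)$, and hence every $T(\ell^{2m})$, maps $M^!_{k-s/2}(\nu_\eta^{-s})$ to itself, each $F^{(m)}_{D,k,s,\ell}$ lies in this space; moreover $\ell^{2m}D\equiv D\equiv s\pmod{24}$ and $\ell^{2m}D>0$ (as $\ell\ge 5$ is coprime to $24$ and squares to $1$ modulo $24$), so $f_{\ell^{2m}D,k,s}$ is defined. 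Thus it suffices to show that the principal part of $F^{(m)}_{D,k,s,\ell}$ is exactly $\ell^{(2k-s-2)m}q^{-\ell^{2m}D/24}$. Recall throughout that the only term of negative order in $f_{E,k,s}$ is $q^{-E/24}$, there being no constant term since $0\not\equiv -s\pmod{24}$.

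First I would dispatch $m=0$ (trivial) and $m=1$ directly. Applying the explicit formula \eqref{eqn: hecke-op-defn} for $T(\ell^2)$ to $f_{D,k,s}$ and reading off the coefficient of $q^{n/24}$ with $n<0$: the term $b_{D,k,s}(\ell^2n)$ would require $\ell^2n=-D$, impossible since $\ell^2\nmid D$, so it never contributes to the principal part; the term $\ell^{2k-s-2}b_{D,k,s}(n/\ell^2)$ contributes $\ell^{2k-s-2}q^{-\ell^2D/24}$; and the middle term contributes $\ell^{k-\frac{s+3}{2}}\left(\frac{-1}{\ell}\right)^{\frac{s+1}{2}}\left(\frac{-12D}{\ell}\right)q^{-D/24}$. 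This last contribution is cancelled exactly by the $-\ell^{k-\frac{s+3}{2}}\left(\frac{-1}{\ell}\right)^{\frac{s+1}{2}}\left(\frac{-12D}{\ell}\right)T(\ell^0)$ summand in the operator defining $F^{(1)}_{D,k,s,\ell}$ in \eqref{eqn: FDm} (with $T(\ell^0)=\mathrm{id}$), leaving principal part $\ell^{2k-s-2}q^{-\ell^2D/24}$, which is \eqref{eqn: cDm} for $m=1$. (If $\ell\mid D$ the middle term already vanishes since then $\left(\frac{-12D}{\ell}\right)=0$, and the same conclusion holds.)

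Next I would establish, for $m\ge 1$, the three-term recursion
\[
F^{(m+1)}_{D,k,s,\ell}=F^{(m)}_{D,k,s,\ell}\big|T(\ell^2)-\ell^{2k-s-2}F^{(m-1)}_{D,k,s,\ell},
\]
by expanding the definition \eqref{eqn: FDm} and applying \eqref{eqn:recursive-hecke} (with $T(\ell^0)=\mathrm{id}$) together with commutativity of the Hecke algebra. Then I would induct on $m$: assuming \eqref{eqn: cDm} for $m-1$ and $m$, it remains to compute the principal part of $f_{\ell^{2m}D,k,s}\big|T(\ell^2)$. Now $\ell\mid\ell^{2m}D$, so the middle term of \eqref{eqn: hecke-op-defn} vanishes identically (its Legendre symbol is $0$), while the first and third terms give $q^{-\ell^{2m-2}D/24}+\ell^{2k-s-2}q^{-\ell^{2m+2}D/24}$. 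Feeding this and the inductive hypotheses into the recursion, the $q^{-\ell^{2m-2}D/24}$ contributions cancel, leaving principal part $\ell^{(2k-s-2)(m+1)}q^{-\ell^{2m+2}D/24}$ for $F^{(m+1)}_{D,k,s,\ell}$, which is \eqref{eqn: cDm} for $m+1$.

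Every step is elementary; the point requiring care is the dichotomy between $\ell\nmid D$ — which occurs only in the base case $m=1$, where the spurious principal term $q^{-D/24}$ surfaces and must be exactly annihilated by the correction built into \eqref{eqn: FDm} — and $\ell\mid\ell^{2m}D$ — which governs every inductive step, where the Legendre-symbol term of the Hecke action simply disappears. One must also check that no two of the three terms in \eqref{eqn: hecke-op-defn} ever land on the same exponent within the principal-part range, and keep the arithmetic of the powers $\ell^{(2k-s-2)m}$ straight; the hypothesis $\ell^2\nmid D$ enters precisely to force $b_{D,k,s}(\ell^2n)=0$ for $n<0$ in the base step.
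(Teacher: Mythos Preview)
Your proof is correct and follows essentially the same approach as the paper: both arguments derive the three-term recursion $F^{(m+1)}=F^{(m)}\big|T(\ell^2)-\ell^{2k-s-2}F^{(m-1)}$ from \eqref{eqn:recursive-hecke}, verify the base cases $m=0,1$ by direct inspection of principal parts (using $\ell^2\nmid D$ to kill $b_{D,k,s}(\ell^2n)$ for $n<0$), and then induct, exploiting that forms in $M^!_{k-s/2}(\nu_\eta^{-s})$ are determined by their principal parts. Your write-up is somewhat more explicit than the paper's in spelling out the cancellation of the $q^{-D/24}$ term in the $m=1$ step and the vanishing of the Legendre-symbol term in the inductive step, but the logic is identical.
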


\begin{proof}
Fix $ D,k,s,\ell$ satisfying the hypotheses above and for brevity write
$
f_D=f_{D,k,s}
$, $ {b_D=b_{D,k,s},} $ $ F_D^{(m)}=F_{D,k,s,\ell}^{(m)}$, and $ c_D^{(m)}=c_{D,k,s,\ell}^{(m)} $. The $ F_D^{(m)} $ have a recurrence relation for $ m\geq 2 $ given by
\begin{align}\label{eqn: FD-recursion}
F_D^{(m)}=F_D^{(m-1)}\big|T(\ell^{2})-\ell^{2k-s-2}F_D^{(m-2)}.
\end{align}
  This is obtained by expanding the $ T(\ell^{2m}),\,T(\ell^{2m-2}) $ operators using \eqref{eqn:recursive-hecke} and the commutativity of the Hecke operators to obtain
  \begin{align*}
  F_D^{(m)}=f_D \big|&\left(T(\ell^{2m-2})-\ell^{k-\frac{s+3}{2} }\left(\frac{-1}{\ell} \right)^{\frac{s+1}{2}}\left(\frac{-12D}{\ell} \right)T(\ell^{2m-4})\right)\big| T(\ell^2)\\
  &-\ell^{2k-s-2}f_D \big|\left(T(\ell^{2m-4})-\ell^{k-\frac{s+3}{2} }\left(\frac{-1}{\ell} \right)^{\frac{s+1}{2}}\left(\frac{-12D}{\ell} \right)T(\ell^{2m-6})\right).
  \end{align*}
 Recall $ \operatorname{p.}F_D^{(0)} =q^{-D/24} $. Note for $ n<0 $ that $ b_D(\ell^2n)=0 $. For all $ n<0 $ we also have $ b_D(n)\neq 0 $ if and only if $ n=-D, $ and $ b_D \left(\frac{n}{\ell^2} \right)\neq 0 $ if and only if $ n=-\ell^2D. $ Hence $\operatorname{p.}F_D^{(1)}=\ell^{2k-s-2}q^{-\ell^2D/24},$ and $ F_D^{(1)} = \ell^{2k-s-2}f_{\ell^2 D}. $\par

We proceed by induction. The base case $ F_D^{(0)} = f_{D} $ is vacuously true, and we just showed $ F_D^{(1)} = \ell^{2k-s-2}f_{\ell^2 D}. $  Suppose inductively, therefore, that for all $ m'\leq m $ we have
  \begin{align*}
  F_D^{(m')} = \ell^{(2k-s-2)m'}f_{\ell^{2m'}D}.
  \end{align*}
From \eqref{eqn: hecke-op-defn} and \eqref{eqn: FD-recursion} we obtain $ F_D^{(m)} = \ell^{(2k-s-2)m}f_{\ell^{2m}D,k,s} $ for all $ m\geq 1 $ by induction, proving the theorem. 
\end{proof}

These $ c_{D,k,s,\ell}^{(m)} $ coefficients can be related to the $ b_{D,k,s} $ coefficients for the same $ D $.

\begin{lemma}\label{lem:tech-1}
  With the same hypotheses as Theorem \ref{thm:tech-main} and $ m\geq 1 $ we have 
\begin{multline}
  \label{eqn:cDm-bD-reln}
c_{D,k,s,\ell}^{(m)}(\ell^{2}n)-\ell^{(2k-s-2)}c_{D,k,s,\ell}^{(m-1)}(n) = b_{D,k,s}(\ell^{2m+2}n)-\ell^{k-\frac{s+3}{2} }\left(\frac{-1}{\ell} \right)^{\frac{s+1}{2} }\left(\frac{-12D}{\ell} \right)b_{D,k,s}(\ell^{2m}n).
\end{multline}
  
\end{lemma}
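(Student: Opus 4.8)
The plan is to prove \eqref{eqn:cDm-bD-reln} by a direct computation from the Hecke formula \eqref{eqn: hecke-op-defn} and the recursion \eqref{eqn: FD-recursion}, the one point of leverage being that the Hecke symbol $\left(\frac{12N}{\ell}\right)$ vanishes whenever $\ell^{2}\mid N$. I will reuse the abbreviations $f_D,b_D,F_D^{(m)},c_D^{(m)}$ from the proof of Theorem~\ref{thm:tech-main}, write $\epsilon_D:=\ell^{k-\frac{s+3}{2}}\left(\frac{-1}{\ell}\right)^{\frac{s+1}{2}}\left(\frac{-12D}{\ell}\right)$ for the constant appearing in \eqref{eqn: FDm}, and set
\[
\Phi_m(n):=c_{D}^{(m)}(\ell^{2}n)-\ell^{2k-s-2}\,c_{D}^{(m-1)}(n),
\]
so that \eqref{eqn:cDm-bD-reln} is exactly the assertion $\Phi_m(n)=b_{D}(\ell^{2m+2}n)-\epsilon_D\,b_{D}(\ell^{2m}n)$ for all $m\ge 1$ and all $n$.

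First I would establish the descent identity $\Phi_m(n)=\Phi_{m-1}(\ell^{2}n)$ for $m\ge 2$. By \eqref{eqn: FD-recursion}, $c_D^{(m)}(\ell^{2}n)=\bigl(F_D^{(m-1)}\big|T(\ell^{2})\bigr)(\ell^{2}n)-\ell^{2k-s-2}c_D^{(m-2)}(\ell^{2}n)$. Expanding $\bigl(F_D^{(m-1)}\big|T(\ell^{2})\bigr)(\ell^{2}n)$ via \eqref{eqn: hecke-op-defn}, the off-diagonal term carries the factor $\left(\frac{12\ell^{2}n}{\ell}\right)=0$ and disappears, leaving $\bigl(F_D^{(m-1)}\big|T(\ell^{2})\bigr)(\ell^{2}n)=c_D^{(m-1)}(\ell^{4}n)+\ell^{2k-s-2}c_D^{(m-1)}(n)$. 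Subtracting $\ell^{2k-s-2}c_D^{(m-1)}(n)$ gives
\[
\Phi_m(n)=c_D^{(m-1)}(\ell^{4}n)-\ell^{2k-s-2}c_D^{(m-2)}(\ell^{2}n)=\Phi_{m-1}(\ell^{2}n),
\]
and iterating down yields $\Phi_m(n)=\Phi_1(\ell^{2m-2}n)$ for every $m\ge 1$.

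Next I would compute the base case. For $m=1$ the definition reads $F_D^{(1)}=f_D\big|T(\ell^{2})-\epsilon_D f_D$, so $c_D^{(1)}(\ell^{2}N)=\bigl(f_D\big|T(\ell^{2})\bigr)(\ell^{2}N)-\epsilon_D\,b_D(\ell^{2}N)$; again the off-diagonal Hecke term vanishes since $\left(\frac{12\ell^{2}N}{\ell}\right)=0$, so $\bigl(f_D\big|T(\ell^{2})\bigr)(\ell^{2}N)=b_D(\ell^{4}N)+\ell^{2k-s-2}b_D(N)$. Since $c_D^{(0)}=b_D$, it follows that $\Phi_1(N)=c_D^{(1)}(\ell^{2}N)-\ell^{2k-s-2}b_D(N)=b_D(\ell^{4}N)-\epsilon_D\,b_D(\ell^{2}N)$. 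Combining this with the descent identity, $\Phi_m(n)=\Phi_1(\ell^{2m-2}n)=b_D(\ell^{2m+2}n)-\epsilon_D\,b_D(\ell^{2m}n)$, which is \eqref{eqn:cDm-bD-reln}.

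There is no serious obstacle here: the argument is entirely the bookkeeping of the Hecke recursion, and what makes the telescoping clean is precisely that $\left(\frac{12N}{\ell}\right)=0$ for $\ell^{2}\mid N$, which kills the off-diagonal Hecke term at each stage. The only thing needing a moment's care is the compatibility of \eqref{eqn: FD-recursion} with the defining formula for $F_D^{(m)}$ at the boundary $m=2$, where $F_D^{(0)}=f_D$ enters — but this is immediate once one notes $c_D^{(0)}=b_D$. (Alternatively one could substitute $c_D^{(m)}(n)=\ell^{(2k-s-2)m}b_{\ell^{2m}D}(n)$ from Theorem~\ref{thm:tech-main}, but that merely rewrites both sides and does not shorten the proof.)
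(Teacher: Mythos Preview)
Your proof is correct and follows essentially the same route as the paper's own argument: both establish the base case by substituting $n\mapsto\ell^{2}n$ into the $m=1$ formula for $c_D^{(1)}$, and both obtain the descent identity $c_D^{(m)}(\ell^{2}n)-\ell^{2k-s-2}c_D^{(m-1)}(n)=c_D^{(m-1)}(\ell^{4}n)-\ell^{2k-s-2}c_D^{(m-2)}(\ell^{2}n)$ from the recursion \eqref{eqn: FD-recursion} after the same substitution kills the middle Hecke term. Your packaging via the auxiliary $\Phi_m$ and the constant $\epsilon_D$ is a bit more explicit, but the mathematics is identical.
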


\begin{proof}
Fix $ D,k,s,\ell $ and write $ c^{(m)}_D=c^{(m)}_{D,k,s,\ell} $, $ b_D $ similarly. When $ m=1 $ equation \eqref{eqn: FDm} yields
\begin{multline}\label{eqn:cD1-bD-reln}
  c_{D}^{(1)}(n)=b_D(\ell^2n)+\ell^{k-\frac{s+3}{2}}\left(\frac{-1}{\ell} \right)^{\frac{s+1}{2}}\left(\frac{12}{\ell} \right)\left[\left(\frac{n}{\ell} \right)-\left(\frac{-D}{\ell} \right)\right]b_D(n)+\ell^{2k-s-2}b_D \left(\frac{n}{\ell^2} \right).
\end{multline}
 Substituting $ n\mapsto\ell^2n $ yields the base case because $ c_D^{(0)}=b_D $. For $ m>1 $ equation \eqref{eqn: FD-recursion} yields
\begin{align}\label{eqn:cDm-bD-reln-prime}
c_D^{(m)}(n) &= c_D^{(m-1)}(\ell^2n)+\ell^{k-\frac{s+3}{2}}\left(\frac{-1}{\ell} \right)^{\frac{s+1}{2}}\left(\frac{12n}{\ell} \right)c_D^{(m-1)}(n)\\
&\hspace{3em}+\ell^{2k-s-2}c_D^{(m-1)}\left(\frac{n}{\ell^2} \right)-\ell^{2k-s-2}c_D^{(m-2)}(n).
\end{align}
Substituting $ n\mapsto\ell^2n $ again yields 
 \begin{align*}
 c^{(m)}_D(\ell^2n)-\ell^{2k-s-2}c^{(m-1)}_D(n) = c_D^{(m-1)}(\ell^4n)-\ell^{2k-s-2}c^{(m-2)}_D(\ell^2n).
 \end{align*}
  The result follows inductively.
\end{proof}

Theorem \ref{thm: main-ak-cong} follows from the next theorem when $ D=s. $

\begin{theorem}\label{thm: general-coeff-cong}
  
Let $ (k,s)\in Z $, $ 0<D\equiv s\pmod{24} $, and $ \ell\geq 5 $ be prime with $ \ell^2\nmid D. $ Then for $ 2k>s+2 $ and $ m\geq 1 $ we have 
  \begin{gather}\label{eqn: bD-recur-l2}
b_{D,k,s}(\ell^2n)\equiv \ell^{k-\frac{s+3}{2} }\left(\frac{12}{\ell} \right)\left(\frac{-1}{\ell} \right)^{\frac{s+1}{2} }\left[\left(\frac{-D}{\ell} \right)-\left(\frac{n}{\ell} \right)\right]b_{D,k,s}(n)\pmod{\ell^{2k-s-2}},\\
  b_{D,k,s}(\ell^{2m+2}n)\equiv \ell^{k-\frac{s+3}{2} }\left(\frac{-1}{\ell} \right)^{\frac{s+1}{2} }\left(\frac{-12D}{\ell} \right)b_{D,k,s}(\ell^{2m}n)\pmod{\ell^{(2k-s-2)m}}.\label{eqn: bd-recur-l2m}
\end{gather}
Moreover, in the case $ (s,6)=3 $ and $ 9\nmid D $, if $ 3\mid n $  then we have 
\begin{gather}\label{eqn: bD-recur-3}
  b_{D,k,s}(9n)\equiv 3^{k-\frac{s+3}{2} }\left(\frac{-1}{3} \right)^{\frac{s+1}{2} }\left[\left(\frac{-D/3}{3} \right)-\left(\frac{n/3}{3} \right)\right]b_{D,k,s}(n)\pmod{3^{2k-s-2}}.\end{gather}
  If $ (s,6)=3 $ and $ 9\nmid D $ then for $ m\geq 1 $ we have 
  \begin{gather}  
  \label{eqn: bD-recur-3-2m}
  b_{D,k,s}(3^{2m+2}n)\equiv 3^{k-\frac{s+3}{2} }\left(\frac{-1}{3} \right)^{\frac{s+1}{2} }\left(\frac{-D/3}{3} \right)b_{D,k,s}(3^{2m}n)\pmod{3^{(2k-s-2)m}}.
\end{gather}

\end{theorem}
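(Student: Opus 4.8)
The plan is to deduce all four congruences directly from Theorem~\ref{thm:tech-main} and Lemma~\ref{lem:tech-1}, using only the observation that the coefficients $c_{D,k,s,\ell}^{(m)}$ carry an explicit power of $\ell$. Throughout I abbreviate $b_D=b_{D,k,s}$ and $c_D^{(m)}=c_{D,k,s,\ell}^{(m)}$. The first step is to record the consequence of Theorem~\ref{thm:tech-main}: since $\ell^2\equiv 1\pmod{24}$ for $\ell\geq 5$ we have $\ell^{2m}D\equiv s\pmod{24}$, so $f_{\ell^{2m}D,k,s}$ is defined and lies in $\Z(\!(q^{1/24})\!)$, and hence $F_{D,k,s,\ell}^{(m)}=\ell^{(2k-s-2)m}f_{\ell^{2m}D,k,s}$ yields
\[
c_D^{(m)}(n)=\ell^{(2k-s-2)m}\,b_{\ell^{2m}D,k,s}(n)\quad\text{for all }n,
\]
so in particular $\ell^{(2k-s-2)m}\mid c_D^{(m)}(n)$ for \emph{every} $n$. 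Note also that the hypothesis $2k>s+2$ guarantees $2k-s-2\geq 1$, and, since $s$ is odd, that $k-\tfrac{s+3}{2}=\tfrac{(2k-s-2)-1}{2}$ is a nonnegative integer, so all the moduli and exponents appearing below are legitimate.

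Given this, \eqref{eqn: bD-recur-l2} follows by reducing the $m=1$ identity \eqref{eqn:cD1-bD-reln} of Lemma~\ref{lem:tech-1}, namely
\[
c_D^{(1)}(n)=b_D(\ell^2n)+\ell^{k-\frac{s+3}{2}}\left(\tfrac{-1}{\ell}\right)^{\frac{s+1}{2}}\left(\tfrac{12}{\ell}\right)\left[\left(\tfrac{n}{\ell}\right)-\left(\tfrac{-D}{\ell}\right)\right]b_D(n)+\ell^{2k-s-2}b_D\!\left(\tfrac{n}{\ell^2}\right),
\]
modulo $\ell^{2k-s-2}$: the left side vanishes by the divisibility just noted (the case $m=1$), the term $\ell^{2k-s-2}b_D(n/\ell^2)$ vanishes trivially, and rearranging what remains is exactly \eqref{eqn: bD-recur-l2}. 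For \eqref{eqn: bd-recur-l2m} I would use \eqref{eqn:cDm-bD-reln} with $m\geq 1$, whose left side is $c_D^{(m)}(\ell^2n)-\ell^{2k-s-2}c_D^{(m-1)}(n)$: the first term is divisible by $\ell^{(2k-s-2)m}$, and $\ell^{2k-s-2}c_D^{(m-1)}(n)$ is divisible by $\ell^{2k-s-2}\cdot\ell^{(2k-s-2)(m-1)}=\ell^{(2k-s-2)m}$, so the left side is $\equiv 0\pmod{\ell^{(2k-s-2)m}}$; reading off the right side of \eqref{eqn:cDm-bD-reln} then gives \eqref{eqn: bd-recur-l2m}.

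The $\ell=3$ statements \eqref{eqn: bD-recur-3} and \eqref{eqn: bD-recur-3-2m}, under the hypotheses $(s,6)=3$ and $9\nmid D$, follow by repeating the constructions verbatim with the operator $T(9)$ of \eqref{eqn: hecke-9} in place of $T(\ell^2)$: one defines the $F^{(m)}$ with the twist factor $3^{k-\frac{s+3}{2}}(\tfrac{-1}{3})^{\frac{s+1}{2}}(\tfrac{-D/3}{3})$, proves the $\ell=3$ analogues of Theorem~\ref{thm:tech-main} and Lemma~\ref{lem:tech-1} (using $9D\equiv s\pmod{24}$, which holds whenever $3\mid s$), and then argues exactly as above; the factor $(\tfrac{n/3}{3})$ appearing in \eqref{eqn: hecke-9} is what produces the symbols $(\tfrac{-D/3}{3})$ and $(\tfrac{n/3}{3})$ in \eqref{eqn: bD-recur-3}. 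I do not expect any genuine obstacle here: all of the content has been front-loaded into Theorem~\ref{thm:tech-main} and Lemma~\ref{lem:tech-1}, and the only point requiring care is that the $\ell^{(2k-s-2)m}$-divisibility of the $c_D^{(m)}$ must be applied to \emph{all} indices, not merely the one being estimated, which is immediate from the integrality of $f_{\ell^{2m}D,k,s}$.
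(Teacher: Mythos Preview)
Your proposal is correct and follows essentially the same approach as the paper: the paper's proof simply notes that $c_{D,k,s,\ell}^{(m)}(n)\equiv 0\pmod{\ell^{(2k-s-2)m}}$ by Theorem~\ref{thm:tech-main}, then reads off \eqref{eqn: bD-recur-l2} from \eqref{eqn:cD1-bD-reln} and \eqref{eqn: bd-recur-l2m} from \eqref{eqn:cDm-bD-reln}, handling $\ell=3$ by analogues via \eqref{eqn: hecke-9}. Your write-up merely makes the reductions explicit and adds the sanity checks on the exponents and on $\ell^{2m}D\equiv s\pmod{24}$, but the underlying argument is identical.
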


\begin{proof}
Note that $ c_{D,k,s,\ell}^{(m)}(n)\equiv 0\pmod{\ell^{(2k-s-2)m}} $ from \eqref{eqn: cDm}. Then \eqref{eqn:cD1-bD-reln} gives \eqref{eqn: bD-recur-l2} while \eqref{eqn: bd-recur-l2m} follows from \eqref{eqn:cDm-bD-reln}. The case $ \ell=3 $ for \eqref{eqn: bD-recur-3} and \eqref{eqn: bD-recur-3-2m} can be handled by proving analogues of Theorem \ref{thm:tech-main} and Lemma \ref{lem:tech-1} using \eqref{eqn: hecke-9}.    
\end{proof}

Now, observe that when $ m>1 $ and $ \ell\geq 5 $ is a prime with $ \ell\mid\mid n $ we have from \eqref{eqn:cDm-bD-reln-prime} \[c_{D,k,s,\ell}^{(m)}(n)=c_{D,k,s,\ell}^{(m-1)}(\ell^2 n)-\ell^{2k-s-2}c_{D,k,s,\ell}^{(m-2)}(n).\]
This, together with \eqref{eqn:cDm-bD-reln} and, in the case $ m=1 $, \eqref{eqn:cD1-bD-reln}, yields the following.

 \begin{lemma}\label{lem:tech-2}
  Let $ (k,s)\in Z $, $ 0<D\equiv s\pmod{24} $, and $ \ell\geq 5 $ with $ \ell^2\nmid D. $ Then for $ m\geq 1 $, if $ \ell\mid\mid n $ we have
 \begin{align*}
 c_{D,k,s,\ell}^{(m)}(n) = b_{D,k,s}(\ell^{2m}n)-\ell^{k-\frac{s+3}{2} }\left(\frac{-1}{\ell} \right)^{\frac{s+1}{2} }\left(\frac{-12D}{\ell} \right)b_{D,k,s}(\ell^{2m-2}n).
 \end{align*}
 
 \end{lemma}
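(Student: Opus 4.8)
The plan is to extract both sides of the asserted identity directly from the coefficient relations already established for the forms $F_{D,k,s,\ell}^{(m)}$, exploiting that the hypothesis $\ell\mid\mid n$ annihilates the ``diagonal'' term and the ``$\ell^2$-descent'' term in the Hecke recursion. Throughout I would abbreviate $c_D^{(m)}=c_{D,k,s,\ell}^{(m)}$ and $b_D=b_{D,k,s}$, and set $\lambda=\ell^{k-\frac{s+3}{2}}\pfrac{-1}{\ell}^{\frac{s+1}{2}}\pfrac{-12D}{\ell}$, so that the claim reads $c_D^{(m)}(n)=b_D(\ell^{2m}n)-\lambda\,b_D(\ell^{2m-2}n)$ whenever $\ell\mid\mid n$.

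For the case $m=1$ I would specialize \eqref{eqn:cD1-bD-reln}. Since $\ell\geq5$ is coprime to $12$, the hypothesis $\ell\mid n$ forces $\pfrac{n}{\ell}=0$; and $\ell^2\nmid n$ means $n/\ell^2\notin\Z$, so $b_D(n/\ell^2)=0$. Hence only two terms of \eqref{eqn:cD1-bD-reln} survive, and since $\pfrac{12}{\ell}\pfrac{-D}{\ell}=\pfrac{-12D}{\ell}$ this collapses to $c_D^{(1)}(n)=b_D(\ell^2n)-\lambda\,b_D(n)$, which is precisely the $m=1$ instance (there $\ell^{2m-2}=\ell^0$).

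For $m\geq2$ I would start from the coefficient form \eqref{eqn:cDm-bD-reln-prime} of the recursion \eqref{eqn: FD-recursion}. As before, $\ell\mid\mid n$ makes the term with $\pfrac{12n}{\ell}$ vanish and makes $c_D^{(m-1)}(n/\ell^2)=0$, so \eqref{eqn:cDm-bD-reln-prime} reduces to the relation recorded just before the lemma, namely $c_D^{(m)}(n)=c_D^{(m-1)}(\ell^2n)-\ell^{2k-s-2}c_D^{(m-2)}(n)$. The key observation is then that the right-hand side of this is exactly the left-hand side of identity \eqref{eqn:cDm-bD-reln} of Lemma~\ref{lem:tech-1} with $m$ replaced by $m-1$ (legitimate since $m-1\geq1$), and that left-hand side equals $b_D(\ell^{2m}n)-\lambda\,b_D(\ell^{2m-2}n)$. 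This gives the lemma.

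There is no genuine obstacle; the only points demanding attention are aligning the shifted indices in \eqref{eqn:cDm-bD-reln} and \eqref{eqn:cD1-bD-reln} correctly, and noting exactly when an argument such as $n/\ell^2$ fails to be an integer and so contributes nothing. (Should one also want the statement for $\ell=3$ when $(s,6)=3$, the identical argument applies verbatim using the $T(9)$-analogues of Theorem~\ref{thm:tech-main} and Lemma~\ref{lem:tech-1}, but that is not needed for the lemma as stated.)
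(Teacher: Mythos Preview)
Your proof is correct and follows essentially the same approach as the paper: the $m=1$ case comes directly from \eqref{eqn:cD1-bD-reln} after the hypothesis $\ell\mid\mid n$ kills the $\pfrac{n}{\ell}$ and $b_D(n/\ell^2)$ terms, and for $m\geq2$ you reduce via \eqref{eqn:cDm-bD-reln-prime} to $c_D^{(m)}(n)=c_D^{(m-1)}(\ell^2n)-\ell^{2k-s-2}c_D^{(m-2)}(n)$ and then invoke \eqref{eqn:cDm-bD-reln} at level $m-1$. Your write-up merely makes explicit the vanishing mechanisms and the index shift that the paper leaves to the reader in the sentence preceding the lemma.
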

 From this we can also obtain congruences for odd powers of $ \ell $ in the argument.

 \begin{corollary}
   Let $ (k,s)\in Z $, $ 0<D\equiv s\pmod{24} $, and $ \ell\geq 5 $ be prime with $ \ell^2\nmid D. $ Then for $ 2k>s+2 $ and $ m\geq 1 $, if $ \ell\nmid n $ then
 \begin{gather}
 b_{D,k,s}(\ell^{2m+1}n)\equiv \ell^{k-\frac{s+3}{2}}\left(\frac{-1}{\ell} \right)^{\frac{s+1}{2}}\left(\frac{-12D}{\ell} \right)b_{D,k,s}(\ell^{2m-1}n)\pmod{\ell^{(2k-s-2)m}}.
 \end{gather}
  
 \end{corollary}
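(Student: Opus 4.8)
The plan is to obtain the congruence as an immediate specialization of Lemma~\ref{lem:tech-2}. Since $\ell\nmid n$, the integer $\ell n$ satisfies $\ell\mid\mid\ell n$, so Lemma~\ref{lem:tech-2} applies with $n$ replaced by $\ell n$; using $\ell^{2m}\cdot\ell n=\ell^{2m+1}n$ and $\ell^{2m-2}\cdot\ell n=\ell^{2m-1}n$, it gives
\begin{align*}
c_{D,k,s,\ell}^{(m)}(\ell n)=b_{D,k,s}(\ell^{2m+1}n)-\ell^{k-\frac{s+3}{2}}\left(\frac{-1}{\ell}\right)^{\frac{s+1}{2}}\left(\frac{-12D}{\ell}\right)b_{D,k,s}(\ell^{2m-1}n).
\end{align*}

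Next I would feed in the divisibility coming from Theorem~\ref{thm:tech-main}. By \eqref{eqn: cDm} we have $F_{D,k,s,\ell}^{(m)}=\ell^{(2k-s-2)m}f_{\ell^{2m}D,k,s}$ with $f_{\ell^{2m}D,k,s}\in\Z(\!(q^{1/24})\!)$, so every coefficient $c_{D,k,s,\ell}^{(m)}(n)$ is divisible by $\ell^{(2k-s-2)m}$; here the hypothesis $2k>s+2$ is exactly what makes $2k-s-2$ a positive integer, so that this is a nontrivial modulus. Applying this with argument $\ell n$ and substituting into the identity above yields $b_{D,k,s}(\ell^{2m+1}n)\equiv\ell^{k-\frac{s+3}{2}}\left(\frac{-1}{\ell}\right)^{\frac{s+1}{2}}\left(\frac{-12D}{\ell}\right)b_{D,k,s}(\ell^{2m-1}n)\pmod{\ell^{(2k-s-2)m}}$, which is the claim; note the case $m=1$ is included since then $\ell^{2m-2}\cdot\ell n=\ell n=\ell^{2m-1}n$.

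There is no real obstacle here: the content has already been extracted into Lemma~\ref{lem:tech-2} and Theorem~\ref{thm:tech-main}. The only things to watch are the bookkeeping that $\ell\mid\mid\ell n$ (so Lemma~\ref{lem:tech-2} genuinely applies for the shifted argument) and the role of the hypothesis $2k>s+2$ noted above. As with the earlier results, the analogous statement for $\ell=3$ when $(s,6)=3$ and $9\nmid D$ would be proved identically, using the $\ell=3$ versions of Theorem~\ref{thm:tech-main} and Lemma~\ref{lem:tech-2} built from the operator \eqref{eqn: hecke-9}.
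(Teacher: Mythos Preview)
Your proposal is correct and follows exactly the paper's route: the paper's proof is the one line ``follows from Theorem~\ref{thm:tech-main} and Lemma~\ref{lem:tech-2} on substitution of $n\mapsto\ell n$,'' and you have simply unpacked that line, including the check that $\ell\mid\mid\ell n$ so Lemma~\ref{lem:tech-2} applies and the observation that $2k>s+2$ makes the modulus nontrivial.
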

 
 The proof follows from Theorem \ref{thm:tech-main} and Lemma \ref{lem:tech-2} on substitution of $ n\mapsto \ell n.$ Later we will also require the case $ \ell\nmid n. $
\begin{lemma}\label{lem:tech-3}
  Let $ (k,s)\in Z $, $ 0<D\equiv s\pmod{24} $, and $ \ell\geq 5 $ be prime with $ \ell^2\nmid D. $ Then for $ m\geq 1 $, if $ \ell\nmid n $ then
\begin{align*}
c_{D,k,s,\ell}^{(m)}(n) = b_{D,k,s}(\ell^{2m}n)+\left[1-\left(\frac{-Dn}{\ell} \right)\right]\sum_{j=1}^{m}\left(\ell^{k-\frac{s+3}{2} }\left(\frac{-1}{\ell} \right)^{\frac{s+1}{2} }\left(\frac{12n}{\ell} \right)\right)^j b_{D,k,s}(\ell^{2m-2j}n).
\end{align*}
\end{lemma}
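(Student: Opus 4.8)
The plan is to collapse the three-term recurrence satisfied by the coefficients $c_{D,k,s,\ell}^{(m)}$ into a first-order recurrence in $m$ as soon as $\ell\nmid n$, and then to solve that recurrence in closed form. Fix $D,k,s,\ell$ as in the hypotheses and abbreviate $c^{(m)}=c_{D,k,s,\ell}^{(m)}$, $b=b_{D,k,s}$. Set
\[
A:=\ell^{k-\frac{s+3}{2}}\left(\frac{-1}{\ell}\right)^{\frac{s+1}{2}}\left(\frac{12n}{\ell}\right),\qquad
A':=\ell^{k-\frac{s+3}{2}}\left(\frac{-1}{\ell}\right)^{\frac{s+1}{2}}\left(\frac{-12D}{\ell}\right).
\]
Since $\ell\nmid 12n$ we have $\left(\frac{12n}{\ell}\right)^2=1$, so $A'=\left(\frac{-Dn}{\ell}\right)A$; write $\ep:=\left(\frac{-Dn}{\ell}\right)$.

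First I would establish that, for $\ell\nmid n$ and every $m\geq 1$,
\[
c^{(m)}(n)=A\,c^{(m-1)}(n)+b(\ell^{2m}n)-A'\,b(\ell^{2m-2}n),
\]
with the convention $c^{(0)}=b$. For $m=1$ this is just \eqref{eqn:cD1-bD-reln}, using that $b(n/\ell^2)=0$ because $\ell\nmid n$ and that the two Legendre symbols there combine into $A$ and $A'$. For $m\geq 2$ I would start from \eqref{eqn:cDm-bD-reln-prime}: the term $c^{(m-1)}(n/\ell^2)$ vanishes since $\ell\nmid n$, leaving
\[
c^{(m)}(n)=c^{(m-1)}(\ell^2 n)+A\,c^{(m-1)}(n)-\ell^{2k-s-2}c^{(m-2)}(n),
\]
and then apply Lemma~\ref{lem:tech-1} with $m$ replaced by $m-1$ to rewrite $c^{(m-1)}(\ell^2 n)=\ell^{2k-s-2}c^{(m-2)}(n)+b(\ell^{2m}n)-A'\,b(\ell^{2m-2}n)$. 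Substituting this in, the two copies of $\ell^{2k-s-2}c^{(m-2)}(n)$ cancel and the first-order relation drops out.

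With the first-order recurrence in hand, I would iterate it from $m$ down to $0$, obtaining
\[
c^{(m)}(n)=A^{m}b(n)+\sum_{i=1}^{m}A^{m-i}\bigl(b(\ell^{2i}n)-\ep A\,b(\ell^{2i-2}n)\bigr),
\]
which a one-line induction confirms. Expanding this, reindexing the inner $\ep$-sum by $i\mapsto i+1$, and merging the two resulting sums over their common range yields
\[
c^{(m)}(n)=b(\ell^{2m}n)+(1-\ep)\sum_{i=0}^{m-1}A^{m-i}b(\ell^{2i}n),
\]
and the substitution $j=m-i$ rewrites the sum as $\sum_{j=1}^{m}A^{j}b(\ell^{2m-2j}n)$, which is exactly the claimed identity. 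Conceptually this is all routine; the one place that needs genuine care is the elimination step — matching the two occurrences of $\ell^{2k-s-2}c^{(m-2)}(n)$ and keeping straight which of $\left(\frac{12n}{\ell}\right)$, $\left(\frac{-12D}{\ell}\right)$, $\left(\frac{-Dn}{\ell}\right)$ appears where — together with the index bookkeeping in the final telescoping, and that is where I expect the main (purely computational) obstacle to lie.
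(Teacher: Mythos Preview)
Your argument is correct and uses exactly the same ingredients as the paper's proof: the base case from \eqref{eqn:cD1-bD-reln}, the recursion \eqref{eqn:cDm-bD-reln-prime} with the $n/\ell^2$ term dropped, and Lemma~\ref{lem:tech-1} to eliminate $c^{(m-1)}(\ell^2 n)$. The only difference is organizational: the paper runs a single induction directly on the closed-form identity (at each step applying Lemma~\ref{lem:tech-1} and the inductive hypothesis simultaneously), whereas you first isolate the first-order recurrence $c^{(m)}(n)=A\,c^{(m-1)}(n)+b(\ell^{2m}n)-A'\,b(\ell^{2m-2}n)$ and then unwind it; the two are equivalent reorderings of the same computation.
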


\begin{proof}
We proceed by induction. Fix $ D,k,s,\ell $ and write $ c_D^{(m)},b_D $ as before. When $ m=1 $ equation \eqref{eqn:cD1-bD-reln} establishes the base case. Fix $ m\geq1 $ and suppose that the conclusion holds for $ m. $ Then \eqref{eqn: FD-recursion} gives
\begin{align*}
c_D^{(m+1)}(n) = c_D^{(m)}(\ell^2n)+\ell^{k-\frac{s+3}{2} }\left(\frac{-1}{\ell} \right)^{\frac{s+1}{2} }\left(\frac{12n}{\ell} \right)c_D^{(m)}(n)-\ell^{2k-s-2}c_D^{(m-1)}(n).
\end{align*}
Using Lemma \ref{lem:tech-1}, together with the inductive hypothesis, yields
\begin{align*}
c_D^{(m+1)}(n)& = b_D(\ell^{2m+2}n)+\ell^{k-\frac{s+3}{2} }\left(\frac{-1}{\ell} \right)^{\frac{s+1}{2} }\left(\frac{12n}{\ell} \right)\left[1-\left(\frac{-Dn}{\ell} \right)\right]b_D(\ell^{2m}n)\\
&+\left[1-\left(\frac{-Dn}{\ell}\right)\right]\sum_{j=1}^{m}\left(\ell^{k-\frac{s+3}{2} }\left(\frac{-1}{\ell} \right)^{\frac{s+1}{2} }\left(\frac{12n}{\ell} \right)\right)^{j+1} b_D(\ell^{2m-2j}n).
\end{align*}
The result follows.
\end{proof}

Together these lemmas imply the following result, which gives Theorem \ref{thm: main-coeff-cong-ak-bd} in the case $ D=s. $ 
\begin{theorem}
Let $ (k,s)\in Z $ and $ \ell\geq 5 $ be prime. For $ m\geq 1, $ if either
 $\left(\frac{-nD}{\ell} \right)=1$, or
$ \ell\mid\mid n $ and $ \ell \mid\mid D $,
then
\begin{align*}
b_{D,k,s}(\ell^{2m}n)=\ell^{(2k-s-2)m}b_{\ell^{2m}D,k,s}(n).
\end{align*}

\end{theorem}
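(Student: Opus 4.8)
The plan is to obtain the identity as an immediate consequence of Theorem~\ref{thm:tech-main} together with Lemmas~\ref{lem:tech-2} and~\ref{lem:tech-3}: the substantive work has already been done, and what remains is to observe that under either hypothesis the ``correction terms'' appearing in those two lemmas vanish. First I would record two preliminary points. Both hypotheses force $\ell^2\nmid D$ — indeed $\ell\nmid D$ in the case $\left(\frac{-nD}{\ell}\right)=1$ and $\ell\mid\mid D$ in the case $\ell\mid\mid n,\ \ell\mid\mid D$ — so Theorem~\ref{thm:tech-main} and Lemmas~\ref{lem:tech-2} and~\ref{lem:tech-3} all apply. Moreover $\ell^{2}\equiv1\pmod{24}$ since $\gcd(\ell,24)=1$, so $\ell^{2m}D\equiv D\equiv s\pmod{24}$ and $f_{\ell^{2m}D,k,s}$ is a legitimate normalized form; comparing the coefficient of $q^{n/24}$ in \eqref{eqn: cDm} then gives $c_{D,k,s,\ell}^{(m)}(n)=\ell^{(2k-s-2)m}b_{\ell^{2m}D,k,s}(n)$ for every $n$.

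Next I would split into the two cases. If $\left(\frac{-nD}{\ell}\right)=1$, then $\ell\nmid n$, so Lemma~\ref{lem:tech-3} applies; its entire sum is multiplied by $1-\left(\frac{-Dn}{\ell}\right)=0$, whence $c_{D,k,s,\ell}^{(m)}(n)=b_{D,k,s}(\ell^{2m}n)$. If instead $\ell\mid\mid n$ and $\ell\mid\mid D$, then Lemma~\ref{lem:tech-2} applies; its correction term carries the factor $\left(\frac{-12D}{\ell}\right)$, which is $0$ because $\ell\mid D$, so again $c_{D,k,s,\ell}^{(m)}(n)=b_{D,k,s}(\ell^{2m}n)$. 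In either case, combining with the coefficient identity from Theorem~\ref{thm:tech-main} yields $b_{D,k,s}(\ell^{2m}n)=\ell^{(2k-s-2)m}b_{\ell^{2m}D,k,s}(n)$, as claimed.

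Since every step is a direct substitution into already-established formulas, there is no genuine obstacle here; the only points requiring care are verifying the divisibility hypothesis $\ell^2\nmid D$ so that the earlier results can be invoked, and noting the mod-$24$ class is preserved so that the target form $f_{\ell^{2m}D,k,s}$ exists — both of which follow immediately from the stated assumptions on $n$, $D$, and $\ell$.
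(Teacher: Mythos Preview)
Your proof is correct and follows essentially the same approach as the paper: both split into the two cases and invoke Lemma~\ref{lem:tech-2} (where $\left(\frac{-12D}{\ell}\right)=0$) or Lemma~\ref{lem:tech-3} (where $1-\left(\frac{-Dn}{\ell}\right)=0$) together with the coefficient identity from Theorem~\ref{thm:tech-main}. You additionally spell out the checks that $\ell^{2}\nmid D$ and that $\ell^{2m}D\equiv s\pmod{24}$, which the paper leaves implicit.
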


\begin{proof}
Suppose first that $ \ell\mid\mid n $ and $ \ell \mid\mid D $. Then Lemma \ref{lem:tech-2}, together with \eqref{eqn: cDm}, gives $ b_{D,k,s}(\ell^{2m}n)=\ell^{(2k-s-2)m}b_{\ell^{2m}D,k,s}(n). $ If $ \left(\frac{-nD}{\ell} \right)=1, $ then the result follows from Lemma \ref{lem:tech-3} and \eqref{eqn: cDm}.
\end{proof}

\section{Consquences for Partition Statistics}

\subsection{4th and 6th Moments of Cranks}\label{sec: crank-moment}

Let $ C_{2j} $ be the generating function for the $ 2j $th moment of cranks, so that
\begin{align*}
C_{2j}(z):=\sum M_{2j}(n)q^{n}.
\end{align*}
Atkin and Garvan \cite[(4.8)]{MR2035811} showed that $ q^{-1/24}\eta(z)C_{2j}(z) $ is a quasimodular form, and so is in the graded ring $ \mathbb{C}[E_2,E_4,E_6] $. Using this, Wang and Yang \cite[(8.24)-(8.25)]{MR4490457} showed that
\begin{align*}
  M_4(n) = \frac{1}{20}a_{4,1}(24n-1)-\frac{1}{20} p(n)+2np(n)-12n^2p(n),
\end{align*}
\vspace{-2em}
\begin{multline*}
  M_6(n)=-\frac{11}{378}a_{6,1}(24n-1)+\frac{1}{14}  a_{4,1}(24n-1)-\frac{3}{14} na_{4,1}(24n-1)\\
  -\frac{8}{189} p(n)+\frac{11}{6} np(n)
  -20n^2p(n)+40 n^3 p(n).
\end{multline*}
They also obtain similar formulas for the symmetrized moments of cranks $ \mu_{2j} $ \cite[(8.37) and (8.38)]{MR4490457}. \par

These identities and Theorem \ref{thm: main-ak-cong} or \cite[Theorem 1.1]{MR4490457} give, for $ \ell\geq 5$ a prime: 
\begin{gather*}
a_{4,1}(\ell^{2m}n)\equiv p \left(\frac{\ell^{2m}n+1}{24} \right)\pmod{20},\\
80M_4\left(\frac{\ell^{2m}n+1}{24} \right)\equiv p\left(\frac{\ell^{2m}n+1}{24} \right)\pmod{\ell^{2m}},\\
448 M_6\left(\frac{\ell^{2m}n+1}{24} \right)\equiv p\left(\frac{\ell^{2m}n+1}{24} \right)\pmod{\ell^{2m}},\\
5760\mu_4\left(\frac{\ell^{2m}n+1}{24} \right)\equiv -17p\left(\frac{\ell^{2m}n+1}{24} \right)\pmod{\ell^{2m}},\\
967680\mu_6\left(\frac{\ell^{2m}n+1}{24} \right)\equiv 367p\left(\frac{\ell^{2m}n+1}{24} \right)\pmod{\ell^{2m}}.
\end{gather*}
Using Theorem \ref{thm: main-coeff-cong-ak-bd} we can improve these to congruences modulo $ \ell^{5m},$ proving Theorem \ref{thm: crank-congruences}.

\begin{proof}[Proof of Theorem \ref{thm: crank-congruences}]
The proof for each of $ M_4,\,M_6,\,\mu_4, $ and $ \mu_6 $ is similar; we will give the proof in the case of $ \mu_6 $ \eqref{eqn: cong-mu6}. Substituting $ n\mapsto \frac{\ell^{2m}n+1}{24} $ into the identity for $ \mu_6(n) $ found in \cite[(8.38)]{MR4490457} we find that
\begin{align*}
\mu_6\left(\frac{\ell^{2m}n+1}{24} \right)&=\frac{-11}{272160}a_{6,1}(\ell^{2m}n)+\left(\frac{-1}{4032}-\frac{\ell^{2m}n+1}{80640}  \right)a_{4,1}(\ell^{2m}n)\\
&+
\left(\frac{157}{544320} -\frac{\ell^{2m}n+1}{103680}+\frac{(\ell^{2m}n+1)^2}{10368}+\frac{(\ell^{2m}n+1)^3}{248832}\right)p\left(\frac{\ell^{2m}n+1}{24} \right).
\end{align*}
The least common multiple of the denominators is $ 8709120; $ moreover, if $ \left(\frac{-n}{\ell} \right)=1 $ then 
\[ a_{4,1}(\ell^{2m}n)\equiv a_{6,1}(\ell^{2m}n)\equiv 0\pmod{\ell^{5m}} \] 
by Theorem \ref{thm: main-coeff-cong-ak-bd}. Hence
\begin{align*}
8709120\mu_6\left(\frac{\ell^{2m}n+1}{24} \right)\equiv (3303+1701\ell^{2m}n+945\ell^{4m}n^2)p\left(\frac{\ell^{2m}n+1}{24} \right)\pmod{\ell^{5m}}.
\end{align*}
    Equation \eqref{eqn: cong-mu6} follows, since $ (8709120,367,189,105)=9$.\par 
    Equations \eqref{eqn:cong-M4}, \eqref{eqn:cong-M6}, and \eqref{eqn:cong-mu4} 
     can be proved with the same method, using \cite[(8.24),(8.25),(8.37)]{MR4490457} and Theorem \ref{thm: main-coeff-cong-ak-bd}.
\end{proof}

\subsection{4th and 6th Moments of Ranks}\label{sec: rank-moment}
The case for the rank moment functions $ N_{2j} $ is complicated by the fact that the rank moment generating function $ R_{2j}(z)=\sum N_{2j}(n)q^n $  is not a quasimodular form; instead, we have
\begin{align*}
q^{-1/24}\eta R_{2j}\in \mathbb{C}[E_2,E_4,E_6]\oplus_{i=0}^{j-1}\Q\Theta^i (R_2),
\end{align*}
 where $ \Theta :=q \frac{d}{dq} $ \cite[Lemma 8.5]{MR4490457}. In this section we will derive congruences for the moments of ranks using the method of Section~\ref{sec: crank-moment} after proving a congruence modulo $ \ell^m $  for $ N_2. $ 

 When Andrews \cite[Theorem 3]{MR2456627} introduced the $ \spt $ function, he found the following identity:
 \begin{align}\label{eqn:N2-spt-ident}
N_2(n)=2np(n)-2\spt(n).
 \end{align}
Work of Bringmann \cite{MR2437679} shows that $ \spt $ is related to a mock modular form of weight $ 3/2 $. Recall that $ F:\H\to\C $ is a harmonic weak Maass form of weight $ k $ and multiplier $ \nu $  on $ \slz $ if it is a real-analytic function satisfying
\begin{align*}
F \left(\abcd z\right) = \nu \left(\abcd\right)(cz+d)^{k}F(z)
\end{align*}
for all $ \abcd\in\slz; $ is annihilated by the hyperbolic Laplacian $ \Delta_k $; and has linear exponential growth at the cusp $ \infty $ \cite[Definition 15.3.2]{MR3675870}. When $ F $ satisfies these conditions we write $ F\in H_{k}(1,\nu). $ Harmonic weak Maass forms can be written as the sum of a weakly holomorphic function and a period integral of a cusp form. See, for example, \cite[Lemma 5.17]{MR3729259}. 
One can adapt \cite{MR2437679}, as in \cite[\S 2]{MR3788640}, to obtain the following.
\begin{theorem}
Define $ H:\H\to \C $ by
\begin{align}\label{eqn:H-spt}
H(z):= \frac{1}{q^{1/24}} - \sum_{\begin{subarray}{c}
  n\geq 23
\end{subarray}}
\left[12\spt \left(\frac{n+1}{24} \right)+n p \left(\frac{n+1}{24} \right)\right]q^{n/24} := \sum_{n\geq -1}h(n)q^{n/24}.
\end{align}
 Then $ H $ can be completed to a harmonic weak Maass form $ \tilde{H}=H+N \in H_{3/2}(1,\nu_\eta^{-1})$, where $ N(z) $ is a nonholomorphic function given by a period integral of the cusp form $ \eta. $ 
\end{theorem}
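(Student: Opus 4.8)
The plan is to identify $H$ as the holomorphic part of a harmonic weak Maass form by relating it, through Andrews' identity \eqref{eqn:N2-spt-ident}, to the second rank-moment generating function, whose modular completion is due to Bringmann \cite{MR2437679}, and then to carry out the level-$1$, multiplier-$\nu_\eta^{-1}$ normalization as in \cite[\S2]{MR3788640}.

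First I would rewrite the Fourier coefficients of $H$ in terms of rank moments. Writing the exponent $n/24$ as $m-\tfrac1{24}$ with $m=(n+1)/24$, the identity $\spt(m)=mp(m)-\tfrac12 N_2(m)$ gives
\[
12\,\spt(m)+(24m-1)p(m)=36\,mp(m)-p(m)-6\,N_2(m).
\]
Using $\sum_{m\ge0}p(m)q^{m-1/24}=\eta(z)^{-1}$ and $\Theta\log\eta=\tfrac1{24}E_2$ --- so that $\sum_{m\ge0}mp(m)q^{m-1/24}=\tfrac{1-E_2}{24\,\eta}$ --- and putting $\mathcal N_2(z):=q^{-1/24}R_2(z)=\sum_{m\ge0}N_2(m)q^{m-1/24}$, this yields the closed form
\[
H(z)=6\,\mathcal N_2(z)+\frac{3E_2(z)-1}{2\,\eta(z)}.
\]
Thus $H$ is the generating function of $N_2$ up to the explicit depth-one quasimodular term $\tfrac{3E_2-1}{2\eta}$.

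Next I would invoke the completion of $\mathcal N_2$. By \cite{MR2437679}, $\mathcal N_2$ is, up to an explicit quasimodular correction, the holomorphic part of a harmonic weak Maass form of weight $\tfrac32$ whose shadow is a unary theta series. The combination defining $H$ is arranged so that the quasimodular defects cancel: after replacing $E_2$ by its completion $E_2^*(z)=E_2(z)-\tfrac3{\pi\operatorname{Im}(z)}$ and combining with the nonholomorphic part of the completion of $\mathcal N_2$, what remains is a single real-analytic function $N(z)$, a period integral of the weight-$\tfrac12$ cusp form $\eta$, such that $\tilde H=H+N$ obeys the weight-$\tfrac32$, multiplier-$\nu_\eta^{-1}$ transformation law on $\slz$. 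Verifying this cancellation --- that the nonholomorphic contributions of $E_2^*$ and of $\mathcal N_2$ combine into exactly one Eichler integral of $\eta$, with no leftover weakly holomorphic shadow --- is the substantive step and the main obstacle; it is precisely the computation carried out in \cite[\S2]{MR3788640}.

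Finally I would check the remaining conditions for $\tilde H\in H_{3/2}(1,\nu_\eta^{-1})$ and identify the shadow. Real-analyticity on $\H$ is clear since $H$ is holomorphic and $N$ converges; $\Delta_{3/2}\tilde H=0$ since holomorphic functions and nonholomorphic Eichler integrals of weight-$\tfrac12$ forms both lie in $\ker\Delta_{3/2}$; and the only singular Fourier term is $q^{-1/24}$, so $\tilde H$ has at most linear exponential growth at $\infty$. Applying $\xi_{3/2}$ kills the holomorphic part, and since $N$ is by construction a period integral of a cusp form, $\xi_{3/2}\tilde H\in S_{1/2}(1,\nu_\eta)=\C\,\eta$; as the $q^{-1/24}$ term forces this image to be nonzero, it is a nonzero multiple of $\eta$, confirming both that $\tilde H$ is genuinely nonholomorphic and that $N$ has the asserted form. (Alternatively one might build $\tilde H$ from scratch as a weight-$\tfrac32$ Maass--Poincar\'{e} series with principal part $q^{-1/24}$; but matching its Fourier coefficients to $12\,\spt(n)+(24n-1)p(n)$ is itself the delicate exact-formula computation, so the route through $N_2$ is preferable.)
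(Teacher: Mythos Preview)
Your proposal is correct and follows precisely the route the paper indicates: the paper does not give a self-contained proof but simply states that one adapts Bringmann \cite{MR2437679} as in \cite[\S2]{MR3788640}, and your argument---rewriting $H$ via Andrews' identity as $6\,\mathcal N_2+\tfrac{3E_2-1}{2\eta}$, then invoking Bringmann's completion of the rank-moment generating function and the level-$1$ normalization of \cite{MR3788640}---is exactly that adaptation spelled out. Your closed-form identity and the acknowledgment that the quasimodular cancellation is the substantive step (deferred to \cite[\S2]{MR3788640}) are both accurate.
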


In what follows let $ D>0 $ be such that $ D\equiv 1\pmod{24} $ and $ \tilde{H}_D\in H_{3/2}(1,\nu_\eta^{-1}) $ be the unique harmonic weak Maass form such that the holomorphic part of $ \tilde{H}_D $, denoted $ H_D $, is 
\begin{align}\label{eqn:H_D}
  H_D(z) = q^{-D}+O(1) = \sum h_D(n)q^{n/24}.
\end{align}
Such a form exists by \cite[Theorem 3.1]{MR3228930}. Note $ H_1 = H $ in \eqref{eqn:H-spt}. Using the fact that there are no nonzero harmonic weak Maass forms with weakly holomorphic part equal to 0, Ahlgren and Kim \cite[Corollary 3.2]{MR3228930} obtained a family of Hecke relations for weight $ 3/2 $ harmonic Maass forms by applying Hecke operators on $ H_{3/2}(1,\nu_\eta^{-1}) $ and identifying principal parts of their weakly holomorphic terms. Their result was originally stated for $ H_{3/2}\left(576,\left(\frac{12}{\cdot} \right)\right) $, however, it is equivalent to the statement below.

\begin{theorem}\label{thm: Ahlgren-Kim-Hecke-reln}
   Let $ T(\ell^{2m}) $ be the $ \ell^{2m}$th Hecke operator on $ H_{3/2}(1,\nu_\eta^{-1}) $, $ H $ be as in \eqref{eqn:H-spt}, and $ H_D $ as in \eqref{eqn:H_D}. If $ \ell\geq 5 $ is prime then
  \begin{align*}
  H \big| T(\ell^{2m})-\left(\frac{12}{\ell} \right)H \big|T(\ell^{2m-2})=\ell^{m}H_{\ell^{2m}}.
  \end{align*}

\end{theorem}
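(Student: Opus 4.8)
The plan is to mimic the argument of Theorem~\ref{thm:tech-main}, now in the setting of weight $3/2$ harmonic weak Maass forms rather than weakly holomorphic forms, exploiting the one rigidity statement available to us: a harmonic weak Maass form in $H_{3/2}(1,\nu_\eta^{-1})$ whose \emph{holomorphic} (i.e. weakly holomorphic) part has vanishing principal part must itself be $0$, since the nonholomorphic part is a period integral of a cusp form in $S_{1/2}(1,\nu_\eta)$ — but $S_{1/2}(1,\nu_\eta)$ is spanned by $\eta$, whose period integral contributes nontrivially to the holomorphic part only through the constant and the shadow (and in fact the cleanest route is the statement already cited from \cite{MR3228930} that there are no nonzero such forms with weakly holomorphic part $0$). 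So the object
\[
G^{(m)} := H\big|\,T(\ell^{2m}) - \left(\tfrac{12}{\ell}\right) H\big|\,T(\ell^{2m-2}) - \ell^{m} H_{\ell^{2m}}
\]
lies in $H_{3/2}(1,\nu_\eta^{-1})$, and it suffices to show the principal part of its holomorphic component vanishes.

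First I would record the explicit action of $T(\ell^2)$ on $q$-expansions in weight $\lambda+1/2 = 3/2$ (so $\lambda = 1$, $s=1$), which is exactly \eqref{eqn: hecke-op-defn} with $k$ replaced by the appropriate half-integral-weight normalization; concretely the coefficient map sends $h(n)\mapsto h(\ell^2 n) + \ell^{\,0}\left(\tfrac{-1}{\ell}\right)\left(\tfrac{12n}{\ell}\right)h(n) + \ell\, h(n/\ell^2)$ (here $k - \frac{s+3}{2} = 0$ and $2k - s - 2 = 1$ after the weight-$3/2$ bookkeeping, which is why the final factor is $\ell^m$, not a higher power). Next I would set up the recursion for $G^{(m)}$ coming from \eqref{eqn:recursive-hecke}–\eqref{eqn: FD-recursion}: writing $\Phi^{(m)} := H|(T(\ell^{2m}) - (\tfrac{12}{\ell})T(\ell^{2m-2}))$ one gets $\Phi^{(m)} = \Phi^{(m-1)}|T(\ell^2) - \ell\,\Phi^{(m-2)}$ for $m\ge 2$, just as in the proof of Theorem~\ref{thm:tech-main}. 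Then I would compute the base cases $m=0,1$ by hand: the principal part of $H$ is $q^{-1/24}$, and since $h(n)=0$ for $n<-1$ and $h(\ell^2 n)=0$ for $n<0$, the only surviving principal-part term of $\Phi^{(1)}$ is $\ell\,q^{-\ell^2/24}$, matching $\ell^1 H_{\ell^2}$. An induction on the recursion, tracking only the finitely many negative-exponent coefficients and using that $b_D$-type coefficients (here $h_D$) at negative index are supported at $n=-D$, then forces $\operatorname{p.}\Phi^{(m)} = \ell^m q^{-\ell^{2m}/24}$ for all $m$, hence $\operatorname{p.}G^{(m)} = 0$ and thus $G^{(m)} = 0$.

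The main obstacle — and the place where this is genuinely harder than Theorem~\ref{thm:tech-main} — is justifying that the Hecke operators $T(\ell^{2m})$ preserve $H_{3/2}(1,\nu_\eta^{-1})$ (not just formally act on $q$-expansions) and that the rigidity ``weakly holomorphic part $0$ $\Rightarrow$ form is $0$'' really applies after applying Hecke operators, i.e. that the nonholomorphic part does not acquire a spurious principal-part contribution under $T(\ell^{2m})$. This is exactly the content one must extract from \cite[Theorem~3.1, Corollary~3.2]{MR3228930}: the Hecke action on the shadow (a cusp form in $S_{1/2}$, which is one-dimensional, spanned by $\eta$) is by a scalar eigenvalue, so $H|T(\ell^{2m})$ again has a nonholomorphic part that is a period integral of a multiple of $\eta$, whence the difference $G^{(m)}$ has the \emph{same} nonholomorphic structure and the rigidity statement is available. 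Once that structural point is granted, the rest is the bookkeeping induction above, formally identical to the proof of Theorem~\ref{thm:tech-main}; indeed, since the argument is so close to that one, I would phrase the write-up as ``the proof is identical to that of Theorem~\ref{thm:tech-main}, replacing $f_{D,k,s}$ by $H_D$, using \cite{MR3228930} in place of the vanishing $S_{k-s/2}(\nu_\eta^{-s})=\{0\}$, and noting that in weight $3/2$ one has $k-\frac{s+3}{2}=0$ and $2k-s-2=1$.''
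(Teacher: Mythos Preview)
Your proposal is correct and follows precisely the method the paper attributes to Ahlgren and Kim. Note that the paper does not itself prove this theorem: it is quoted from \cite[Corollary~3.2]{MR3228930}, with the one-line summary that the result was obtained ``by applying Hecke operators on $H_{3/2}(1,\nu_\eta^{-1})$ and identifying principal parts of their weakly holomorphic terms,'' together with the remark that the original statement was for $H_{3/2}\!\left(576,\left(\tfrac{12}{\cdot}\right)\right)$ and is equivalent. Your write-up is exactly a reconstruction of that argument in the $\nu_\eta^{-1}$ setting, run in parallel with the proof of Theorem~\ref{thm:tech-main}; the base case and the recursion you give are correct, and you have correctly isolated the one genuinely new ingredient (Hecke stability of $H_{3/2}(1,\nu_\eta^{-1})$ and the uniqueness of $\tilde H_D$ given the principal part of its holomorphic part), which is precisely what is drawn from \cite[Theorem~3.1]{MR3228930}.

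One small point of precision: the rigidity you need is not merely ``weakly holomorphic part equal to $0$ implies the form is $0$,'' but the stronger statement you correctly wrote first, namely that vanishing \emph{principal part} of the holomorphic part forces the form to vanish. This is exactly what underlies the uniqueness of $\tilde H_D$ asserted in the paper (and in \cite{MR3228930}); your later parenthetical phrasing slightly understates it, but your argument uses the correct version.
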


A corollary implicit in Theorem \ref{thm: Ahlgren-Kim-Hecke-reln} is a relation between the coefficients of $ H $ and $ H_{\ell^{2m}}. $  
\begin{corollary}\label{cor: Hecke-reln-H-spt}
  With the same conditions as Theorem \ref{thm: Ahlgren-Kim-Hecke-reln}, if $ \left(\frac{-n}{\ell} \right)=1 $ and $ m\geq 1 $ then
  \begin{align}\label{eqn:HMF-hl2m}
  h(\ell^{2m}n)=\ell^m h_{\ell^{2m}}(n).
  \end{align}
\end{corollary}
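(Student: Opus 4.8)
The plan is to extract the coefficient identity \eqref{eqn:HMF-hl2m} directly from the Hecke relation in Theorem~\ref{thm: Ahlgren-Kim-Hecke-reln} by comparing the coefficients of $q^{n/24}$ on both sides, using the explicit action of the Hecke operator $T(\ell^{2m})$ on $q$-expansions. The weight here is $k=3/2$, the multiplier is $\nu_\eta^{-1}$ (so $s=1$), and the relevant specialization of the formulas \eqref{eqn: hecke-op-defn} and \eqref{eqn:recursive-hecke} gives $k-\tfrac{s+3}{2}=0$ and $2k-s-2=0$; in particular every power of $\ell$ coming from the Hecke operator collapses, so $T(\ell^2)$ acts on a coefficient-indexed series by $a(n)\mapsto a(\ell^2 n)+\left(\frac{-1}{\ell}\right)\left(\frac{12n}{\ell}\right)a(n)+a(n/\ell^2)=a(\ell^2 n)+\left(\frac{-12n}{\ell}\right)a(n)+a(n/\ell^2)$.

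First I would record the closed form for the coefficients of $H|T(\ell^{2m})$ under the hypothesis $\left(\frac{-n}{\ell}\right)=1$ (so $\ell\nmid n$). The cleanest route is induction on $m$ using the recursion \eqref{eqn:recursive-hecke}, exactly paralleling the proof of Lemma~\ref{lem:tech-3}: since $\ell\nmid n$ the ``$a(n/\ell^2)$'' terms never contribute, and one obtains that the coefficient of $q^{n/24}$ in $H|T(\ell^{2m})$ equals
\begin{align*}
h(\ell^{2m}n)+\Bigl[1-\left(\tfrac{-n}{\ell}\right)\Bigr]\sum_{j=1}^{m}\left(\tfrac{-12n}{\ell}\right)^j h(\ell^{2m-2j}n).
\end{align*}
Under $\left(\frac{-n}{\ell}\right)=1$ the bracket $\bigl[1-\left(\tfrac{-n}{\ell}\right)\bigr]$ vanishes, so this reduces simply to $h(\ell^{2m}n)$. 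Likewise the coefficient of $q^{n/24}$ in $\left(\frac{12}{\ell}\right)H|T(\ell^{2m-2})$ reduces to $\left(\frac{12}{\ell}\right)h(\ell^{2m-2}n)$ when $\left(\frac{-n}{\ell}\right)=1$. Subtracting, and using that Theorem~\ref{thm: Ahlgren-Kim-Hecke-reln} identifies the difference with $\ell^m H_{\ell^{2m}}$, whose coefficient of $q^{n/24}$ is $\ell^m h_{\ell^{2m}}(n)$ by \eqref{eqn:H_D}, gives $h(\ell^{2m}n)-\left(\frac{12}{\ell}\right)h(\ell^{2m-2}n)=\ell^m h_{\ell^{2m}}(n)$ — one power of $\ell$ off from the claim, so a small additional argument is needed.

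To close the gap I would iterate: the same computation applied with $m$ replaced by $m-1$ shows the coefficient of $q^{n/24}$ in $H|T(\ell^{2m-2})$ is just $h(\ell^{2m-2}n)$ (again because $\left(\frac{-n}{\ell}\right)=1$ kills the correction sum), while $H_{\ell^{2m-2}}$ does not appear with a coefficient here because $\ell^2\mid \ell^{2m}$ forces a different bookkeeping; more straightforwardly, I would instead feed the hypothesis $\left(\frac{-n}{\ell}\right)=1$ into Theorem~\ref{thm: Ahlgren-Kim-Hecke-reln} after the substitution $n\mapsto \ell^{2}n$ is unnecessary, and simply note that on the range $\left(\frac{-n}{\ell}\right)=1$ the holomorphic part of $H|T(\ell^{2m})$ is literally $\sum_{\left(\frac{-n}{\ell}\right)=1} h(\ell^{2m}n)q^{n/24}+\cdots$ where the omitted terms are supported on $n$ with $\left(\frac{-n}{\ell}\right)\neq 1$; matching with $\ell^m H_{\ell^{2m}}+\left(\frac{12}{\ell}\right)H|T(\ell^{2m-2})$ coefficient-by-coefficient on that range and checking that $H|T(\ell^{2m-2})$ also vanishes there (its coefficient is $h(\ell^{2m-2}n)$, which is the coefficient of a cusp-bounded form at an index that Theorem~\ref{thm: Ahlgren-Kim-Hecke-reln} with parameter $m-1$ already controls) yields the identity with the correct power $\ell^m$. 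The main obstacle is precisely this power-of-$\ell$ bookkeeping: one must be careful that the recursion \eqref{eqn:recursive-hecke} with $2k-s-2=0$ does not introduce or suppress the factor $\ell^m$ that is instead produced on the right-hand side of Theorem~\ref{thm: Ahlgren-Kim-Hecke-reln}, and that the ``lower'' term $H|T(\ell^{2m-2})$ contributes nothing on the relevant arithmetic progression. Everything else is a routine coefficient comparison modeled on the proofs of Theorem~\ref{thm:tech-main} and Lemma~\ref{lem:tech-3} already given in Section~\ref{sec:hecke-reln-eis-eta}.
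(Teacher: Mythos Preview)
There is a genuine gap, and it stems from a misreading of the parameters and of Lemma~\ref{lem:tech-3}. In the paper's convention the weight is $k-s/2$, so for $H_{3/2}(1,\nu_\eta^{-1})$ one has $k=2$ and $s=1$; hence $2k-s-2=1$, not $0$. The factor $\ell^{2k-s-2}$ in \eqref{eqn: hecke-op-defn} and \eqref{eqn:recursive-hecke} is therefore $\ell$, which is exactly the $\ell$ appearing in \eqref{eqn:hl1}, \eqref{eqn:hl2m}, and \eqref{eqn:hlm-recur}. Your claim that ``every power of $\ell$ coming from the Hecke operator collapses'' is false, though under $\ell\nmid n$ the $a(n/\ell^2)$ term vanishes anyway.

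The more serious error is that the closed form you wrote down,
\[
h(\ell^{2m}n)+\bigl[1-\left(\tfrac{-n}{\ell}\right)\bigr]\sum_{j=1}^{m}\left(\tfrac{-12n}{\ell}\right)^j h(\ell^{2m-2j}n),
\]
is \emph{not} the coefficient of $q^{n/24}$ in $H|T(\ell^{2m})$. It is the coefficient of $q^{n/24}$ in the \emph{difference} $H_\ell^{(m)}:=H|T(\ell^{2m})-\left(\tfrac{12}{\ell}\right)H|T(\ell^{2m-2})$; this is precisely what Lemma~\ref{lem:tech-3} computes (it gives $c_D^{(m)}(n)$, not the coefficient of $f_D|T(\ell^{2m})$). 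By Theorem~\ref{thm: Ahlgren-Kim-Hecke-reln} this difference already equals $\ell^m H_{\ell^{2m}}$, so under $\left(\tfrac{-n}{\ell}\right)=1$ you get $h(\ell^{2m}n)=\ell^m h_{\ell^{2m}}(n)$ immediately --- there is no ``one power of $\ell$ off.'' Your phantom discrepancy arose because you subtracted $\left(\tfrac{12}{\ell}\right)H|T(\ell^{2m-2})$ a second time. The subsequent attempt to close the gap by arguing that the coefficient $h(\ell^{2m-2}n)$ of $H|T(\ell^{2m-2})$ vanishes on $\left(\tfrac{-n}{\ell}\right)=1$ is simply false (take $m=1$: $h(n)$ is generically nonzero), and the reasoning there is circular.
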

This can be proved by using Theorem \ref{thm: Ahlgren-Kim-Hecke-reln} with \cite[Theorem 3.1]{MR3228930} to prove analogues of Lemmas \ref{lem:tech-1} and \ref{lem:tech-3}. Note, in particular, that the congruence $ h(\ell^{2m}n)\equiv 0\pmod{\ell^m} $ is proved in \cite[Theorem 1.1]{MR2822242}.
\begin{proof}[Proof Sketch.]
For $ \ell, m $ as above write
\begin{gather*}
H_\ell^{(m)}:= H \big| T(\ell^{2m})-\left(\frac{12}{\ell} \right)H \big|T(\ell^{2m-2}),
\end{gather*}
    where $ T(\ell^2) $ acts on the weakly holomorphic parts of forms in $ H_{3/2}(1,\nu_\eta^{-1}) $ as in \eqref{eqn: hecke-op-defn} with $ k=2 $ and $ s=1 $. Denote the weakly holomorphic part of $ H_\ell^{(m)} $ by 
   \[\sum h_\ell^{(m)}(n)q^{n/24}.\]
Using the same methods as in the proofs of Theorem \ref{thm:tech-main} and Lemma \ref{lem:tech-1} one can show
\begin{gather}
  \label{eqn:hl1}
    h_{\ell}^{(1)}(n) = h(\ell^2 n)+ \left(\frac{12}{\ell} \right)\left[\left(\frac{-n}{\ell} \right)-1\right]h(n)+\ell h \left(\frac{n}{\ell^2} \right),\\
  \label{eqn:hl2m}
h_\ell^{(m)}(\ell^2 n)-\ell h_\ell^{(m-1)}(n)=h(\ell^{2m+2}n)-\left(\frac{12}{\ell} \right)h(\ell^{2m}n).
\end{gather}
In the process of proving this claim one obtains, for $ m\geq 2, $  the identity 
\begin{gather}\label{eqn:hlm-recur}
H_\ell^{(m)} = H_\ell^{(m-1)}\big|T(\ell^2)-\ell H_\ell^{(m-2)}.
\end{gather}
When $ \ell\nmid n $ equation \eqref{eqn:hl1} is the base case $ m=1 $ for the claim that
\begin{align*}
h_\ell^{(m)}(n)=h(\ell^{2m}n)+\left(\frac{12}{\ell} \right)\left[\left(\frac{-n}{\ell} \right)-1\right]\sum_{j=1}^m \left(\frac{-12n}{\ell} \right)^j h_\ell(\ell^{2m-2j}n),
\end{align*}
an analogue of Lemma \ref{lem:tech-3} for $ H_{3/2}(1,\nu_\eta^{-1}) $. The remaining cases $ m\geq 2 $ are proven inductively using \eqref{eqn:hl2m} and \eqref{eqn:hlm-recur}. Corollary \ref{cor: Hecke-reln-H-spt} then follows from Theorem \ref{thm: Ahlgren-Kim-Hecke-reln}.
\end{proof}

Corollary \ref{cor: Hecke-reln-H-spt}, \eqref{eqn:N2-spt-ident}, and \eqref{eqn:H-spt} together give a Hecke relation for $ N_2 $.

\begin{corollary}\label{cor:N2-ident}
If $ \ell\geq 5 $ is prime, $ m\geq 1, $ and $ \left(\frac{-n}{\ell} \right)=1 $ then
\begin{align*}
N_2 \left(\frac{\ell^{2m}n+1}{24} \right)=\left(\frac{1}{12} +\frac{\ell^{2m}n}{4} \right)p \left(\frac{\ell^{2m}n+1}{24} \right)+\frac{\ell^m}{6} h_{\ell^{2m}}(n).
\end{align*}
  Consequently, for these $ m,n,\ell $ we have
  \begin{align*}
  12N_2\left(\frac{\ell^{2m}n+1}{24} \right)\equiv p\left(\frac{\ell^{2m}n+1}{24} \right)\pmod{\ell^{m}}.
  \end{align*}
  
\end{corollary}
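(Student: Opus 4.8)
The plan is to combine the three displayed inputs—Corollary~\ref{cor: Hecke-reln-H-spt}, the Andrews identity \eqref{eqn:N2-spt-ident}, and the explicit expansion \eqref{eqn:H-spt} defining the coefficients $h(n)$—into a single identity for $N_2$ evaluated at arithmetic progressions, and then read off the congruence. First I would set $n' = \frac{\ell^{2m}n+1}{24}$ (an integer precisely because $\ell^{2m}n \equiv -1 \pmod{24}$ when $n\equiv -1\pmod{24}$, which is forced by $\left(\frac{-n}{\ell}\right)=1$ together with the support condition, or rather is part of the standing hypothesis on $n$), and observe from \eqref{eqn:H-spt} that for $N := \ell^{2m}n \geq 23$ we have
\begin{align*}
h(\ell^{2m}n) = -12\,\spt\!\left(\tfrac{\ell^{2m}n+1}{24}\right) - \ell^{2m}n\, p\!\left(\tfrac{\ell^{2m}n+1}{24}\right).
\end{align*}
(The small cases where $\ell^{2m}n < 23$ or $=-1$ should be noted as either vacuous or directly checkable, since then $n'\le 1$.)

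Next I would eliminate $\spt$ using \eqref{eqn:N2-spt-ident}, which gives $\spt(n') = n' p(n') - \tfrac12 N_2(n')$, hence
\begin{align*}
h(\ell^{2m}n) = -12 n' p(n') + 6 N_2(n') - \ell^{2m}n\, p(n') = 6 N_2(n') - \left(12 n' + \ell^{2m}n\right) p(n').
\end{align*}
Since $12 n' = \tfrac{\ell^{2m}n+1}{2}$, the coefficient of $p(n')$ simplifies to $\tfrac{\ell^{2m}n+1}{2} + \ell^{2m}n = \tfrac{1}{2} + \tfrac{3\ell^{2m}n}{2}$, so solving for $N_2(n')$ yields
\begin{align*}
N_2\!\left(\tfrac{\ell^{2m}n+1}{24}\right) = \frac{1}{6} h(\ell^{2m}n) + \left(\frac{1}{12} + \frac{\ell^{2m}n}{4}\right) p\!\left(\tfrac{\ell^{2m}n+1}{24}\right).
\end{align*}
Then I would invoke Corollary~\ref{cor: Hecke-reln-H-spt}, which under the hypotheses $\ell\geq 5$ prime, $m\geq 1$, $\left(\frac{-n}{\ell}\right)=1$ gives $h(\ell^{2m}n) = \ell^m h_{\ell^{2m}}(n)$; substituting this in produces exactly the claimed identity. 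The congruence $12 N_2(n') \equiv p(n') \pmod{\ell^m}$ is then immediate: multiply through by $12$ and note $12\cdot\tfrac{\ell^m}{6} h_{\ell^{2m}}(n) = 2\ell^m h_{\ell^{2m}}(n) \equiv 0 \pmod{\ell^m}$ (here $h_{\ell^{2m}}(n)\in\Z$ since $H_{\ell^{2m}}$ has integral coefficients, which follows from \cite[Theorem 3.1]{MR3228930}), while $12\left(\tfrac{1}{12}+\tfrac{\ell^{2m}n}{4}\right) = 1 + 3\ell^{2m}n \equiv 1 \pmod{\ell^m}$ since $m\geq 1$.

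The one genuinely substantive input is Corollary~\ref{cor: Hecke-reln-H-spt}, but that is established earlier in the excerpt, so the remaining work here is purely bookkeeping; the only place to be careful is the arithmetic simplification of the $p(n')$-coefficient and the verification that all quantities involved are integers so that the reduction mod $\ell^m$ is legitimate. I do not anticipate a real obstacle—the proof is essentially a two-line substitution once the coefficient extraction from \eqref{eqn:H-spt} is written down correctly, with the mild caveat of checking the edge cases where the argument of $p$ or $\spt$ is too small for \eqref{eqn:H-spt}'s sum to apply directly.
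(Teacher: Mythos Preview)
Your proposal is correct and follows exactly the approach the paper indicates: it simply combines \eqref{eqn:H-spt}, \eqref{eqn:N2-spt-ident}, and Corollary~\ref{cor: Hecke-reln-H-spt}, and you have filled in the straightforward algebra that the paper leaves implicit. The only remark is that your parenthetical about why $\tfrac{\ell^{2m}n+1}{24}$ is an integer is slightly muddled---the condition $n\equiv -1\pmod{24}$ is a standing support hypothesis for coefficients in the $\nu_\eta^{-1}$ multiplier and has nothing to do with the Legendre symbol condition---but you already note this is the standing hypothesis, so there is no gap.
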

Note that the congruence above also follows from \eqref{eqn:N2-spt-ident} and \cite[Theorem 1.1]{MR2822242}.
Using Theorem \ref{thm: main-ak-cong} and Corollary \ref{cor:N2-ident}, together with the identities \cite[(8.29), (8.30), (8.45), (8.46)]{MR4490457}, we obtain the following congruences for moments of ranks.

\begin{theorem}\label{thm:rank-congruences}
If $ \ell\geq 5 $ is a prime, $ m\geq 1, $ and $ \left(\frac{-n}{\ell} \right)=1 $ then
\begin{gather}
80 N_4\left(\frac{\ell^{2m}n+1}{24} \right)\equiv 448N_6\left(\frac{\ell^{2m}n+1}{24} \right)\equiv p\left(\frac{\ell^{2m}n+1}{24} \right)\pmod{\ell^m},\\
5760\eta_4\left(\frac{\ell^{2m}n+1}{24} \right)\equiv -17p\left(\frac{\ell^{2m}n+1}{24} \right)\pmod{\ell^m},\\
967680\eta_6\left(\frac{\ell^{2m}n+1}{24} \right)\equiv 367p\left(\frac{\ell^{2m}n+1}{24} \right)\pmod{\ell^m}.
\end{gather}
  
\end{theorem}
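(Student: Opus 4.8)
The plan is to repeat the argument used to prove Theorem~\ref{thm: crank-congruences}, now with the rank moments in place of the crank moments. The inputs are the four Wang--Yang expansions \cite[(8.29),(8.30),(8.45),(8.46)]{MR4490457}, which write each of $N_4,N_6,\eta_4,\eta_6$ as a fixed $\Q$-linear combination of $a_{6,1}(24n-1)$, $a_{4,1}(24n-1)$, the monomials $n^{j}p(n)$, and the monomials $n^{i}N_2(n)$ (the last family coming, via \cite[Lemma 8.5]{MR4490457}, from the $\Theta^{i}(R_2)$ summands of $q^{-1/24}\eta R_{2j}$). Into each of these identities I substitute $n\mapsto\frac{\ell^{2m}n+1}{24}$. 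As in the crank case, both sides vanish unless $\ell^{2m}n\equiv-1\pmod{24}$ — the left side because the partition statistics vanish off $\Z_{\geq0}$, and the right side because $a_{4,1},a_{6,1}$ are supported on residues $\equiv-1\pmod{24}$ — so I may set $N:=\frac{\ell^{2m}n+1}{24}\in\Z_{\geq0}$ and note $24N-1=\ell^{2m}n$.

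Now I reduce modulo $\ell^{m}$. To sidestep the non-$\ell$-integrality of the Wang--Yang coefficients (for instance the $\tfrac{2}{15}$ in front of $a_{4,1}$ and the $\tfrac{11}{378}$ in front of $a_{6,1}$ are not $\ell$-integral when $\ell=5$ or $\ell=7$), I first multiply each substituted identity by a suitable fixed positive integer $L$, so that $L\cdot N_{2j}(N)$ becomes an honest $\Z$-linear combination of $a_{4,1}(\ell^{2m}n)$, $a_{6,1}(\ell^{2m}n)$, $p(N)$, and $N_2(N)$, in which the coefficient of $N_2(N)$ is a multiple of $12$; all of these coefficients are fixed integers or fixed integer polynomials in $\ell^{2m}n$, independent of $m$ and $\ell$. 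Iterating the congruences of Theorem~\ref{thm: main-ak-cong} gives $a_{4,1}(\ell^{2m}n)\equiv a_{6,1}(\ell^{2m}n)\equiv0\pmod{\ell^{m}}$ (in fact modulo a higher power), so — the coefficients now being integers — those terms drop out modulo $\ell^{m}$. Corollary~\ref{cor:N2-ident} gives $12N_2(N)\equiv p(N)\pmod{\ell^{m}}$, so the $N_2(N)$-term is replaced by the corresponding integer multiple of $p(N)$. What remains is $L\cdot N_{2j}(N)\equiv P_{2j}(\ell^{2m}n)\,p(N)\pmod{\ell^{m}}$ for a fixed $P_{2j}\in\Z[x]$; since $P_{2j}(x)-P_{2j}(0)$ is divisible by $x$ and $\ell^{2m}n\equiv0\pmod{\ell^{m}}$ for $m\geq1$, this collapses to $L\cdot N_{2j}(N)\equiv P_{2j}(0)\,p(N)\pmod{\ell^{m}}$ with $P_{2j}(0)\in\Z$. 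Cancelling $\gcd\bigl(L,P_{2j}(0)\bigr)$ — which one checks is prime to $\ell$ — yields the four congruences, the multipliers $80,448,5760,967680$ and the right-hand constants $1,1,-17,367$ being read off from the same computation carried out over $\Q$.

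I do not expect a conceptual difficulty; the content is the explicit manipulation of the four Wang--Yang identities. The step to watch is the bookkeeping with denominators: one must clear them \emph{before} reducing modulo $\ell^{m}$, so that the $a_{k,1}$-terms (now carrying integer coefficients) can be discarded without any concern about $\ell$ dividing a denominator when $\ell\in\{5,7\}$, and one must then verify that the factor cancelled at the end is coprime to $\ell$ — this is the one point at which the small primes $5$ and $7$ require care. Finally, it is worth recording why the precision obtained is only $\ell^{m}$, in contrast with the $\ell^{5m}$ of Theorem~\ref{thm: crank-congruences}: the bottleneck is Corollary~\ref{cor:N2-ident}, which controls $12N_2(N)$ only modulo $\ell^{m}$, whereas the $a_{k,1}$-terms contribute nothing modulo $\ell^{m}$ and the polynomial-in-$\ell^{2m}n$ factors contribute only their constant terms.
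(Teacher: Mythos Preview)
Your proposal is correct and follows essentially the same approach as the paper, which simply says the proof ``proceeds along the same lines as the proof for the $\mu_6$ congruence in Theorem~\ref{thm: crank-congruences}'' using Theorem~\ref{thm: main-ak-cong}, Corollary~\ref{cor:N2-ident}, and the Wang--Yang identities \cite[(8.29),(8.30),(8.45),(8.46)]{MR4490457}. Your write-up is in fact more explicit than the paper's about clearing denominators before reducing, arranging the $N_2$ coefficient to be a multiple of $12$, and identifying Corollary~\ref{cor:N2-ident} as the bottleneck that limits the modulus to $\ell^m$ rather than $\ell^{5m}$.
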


The proof proceeds along the same lines as the proof for the $ \mu_6 $ congruence in Theorem \ref{thm: crank-congruences}, presented in Section~\ref{sec: crank-moment}.

\subsection{Proof of Theorem \ref{thm: spt-main}}\label{sec: spt-proof}

We first observe that for $ \ell\geq 5 $ prime and $ \left(\tfrac{-n}{\ell} \right)=1 $ we have from Theorems \ref{thm: crank-congruences} and \ref{thm:rank-congruences}
\begin{align*}
  5760\mu_4 \left(\frac{\ell^{2m}n+1}{24} \right)\equiv 5760\eta_4 \left(\frac{\ell^{2m}n+1}{24} \right)\equiv -17 p \left(\frac{\ell^{2m}n+1}{24} \right)\pmod{\ell^{m}},\\
  967680\mu_6 \left(\frac{\ell^{2m}n+1}{24} \right)\equiv 967680\eta_6 \left(\frac{\ell^{2m}n+1}{24} \right)\equiv 367 p \left(\frac{\ell^{2m}n+1}{24} \right)\pmod{\ell^{m}}.
\end{align*}
Then from \eqref{eqn:spt-defn} we have
\begin{gather*}
  \spt_2 \left(\frac{\ell^{2m}n+1}{24} \right)\equiv 0\pmod{\ell^{m-\delta_{5,\ell}}},\\
  \spt_3 \left(\frac{\ell^{2m}n+1}{24} \right)\equiv 0\pmod{\ell^{m-\delta_{5,\ell}-\delta_{7,\ell}}}.
\end{gather*}

To show $ \spt_2 \left(\frac{5^{2m}n+1}{24} \right)\equiv 0\pmod{5^{m}} $ for $ \left(\frac{-n}{5} \right)=1, $ note that from \cite[(8.64)]{MR4490457} and Corollary \ref{cor:N2-ident} we have
\begin{multline*}
  \spt_2 \left(\frac{5^{2m}n+1}{24} \right)=-\frac{a_{4,1}(5^{2m}n)}{288}+h_{5^{2m}}(n) \left(\frac{5^{3 m}n}{288} +\frac{5^m}{288}\right)\\
  +p \left(\frac{5^{2m}n+1}{24} \right) \left(\frac{ 5^{4 m}n^2}{144} +\frac{5^{2 m}n}{288} -\frac{1}{90}\right).
  \end{multline*}
 The only term with $ 5 $ in the denominator is $ -\frac{p \left(\frac{\ell^{2m}n+1}{24} \right)}{90}.  $ But $ p \left(\frac{5^{2m}n+1}{24} \right)\equiv 0\pmod{5^{2m}} $ for $ m\geq 1 $ by the Ramanujan congruences, finishing the proof of Theorem \ref{thm: spt-main} for $ \spt_2. $\par 

 Now for $ \spt_3\left(\frac{\ell^{2m}n+1}{24} \right). $ For $ \left(\frac{-n}{\ell} \right)=1 $ we have
 \begin{multline}
 \spt_3 \left(\frac{\ell^{2m}n+1}{24} \right) =\left(\frac{n \ell ^{2 m}}{11520}+\frac{1}{2304}\right)a_{4,1}(\ell^{2m}n)+\frac{1}{38880}a_{6,1}(\ell^{2m}n)\\
 +\left(-\frac{n^2 \ell ^{5
  m}}{23040}-\frac{n \ell ^{3 m}}{2304}-\frac{\ell ^m}{2560}\right)h_{\ell^{2m}}(n)+ \left(-\frac{13 n^3 \ell ^{6 m}}{124416}-\frac{n^2 \ell ^{4
  m}}{1152}-\frac{n \ell ^{2 m}}{2560}\right)p\left(\frac{\ell^{2m}n+1}{24}\right).
 \end{multline}
 All terms are divisible by powers of $ \ell $ greater than $ m+1 $ except $ \frac{-\ell^m}{2560}h_{\ell^{2m}}.$ The observation that $ 7\nmid 2560 $ but $ 5\mid\mid 2560 $ concludes the proof of Theorem \ref{thm: spt-main} for $ \spt_3. $ \par

We now expand the generating functions for $ \spt_4,\,\spt_5 $ as elements in $ \mathbb{C}[E_2,E_4,E_6]\oplus_{i=0}^{j-1}\Q\Theta^i (R_2) $ \cite[Lemma 8.5]{MR4490457}. Doing so with Mathematica \cite{Mathematica}, using \cite[(1.3)]{MR2822233}, yields the following lemma.
 
\begin{lemma}\label{lem:spt45}
For all $ n\geq 0 $ we have
\begin{multline}\label{eqn:spt4-modular}
  \spt_4(n)=-\mfrac{67}{191600640}a_{8,1}(24n-1)+\left(-\mfrac{43
  n}{2993760}-\mfrac{19}{3991680}\right)a_{6,1}(24n-1)\\
  +\left(-\mfrac{23 n^2}{60480}-\mfrac{73
  n}{181440}-\mfrac{431}{8709120}\right)a_{4,1}(24n-1)+\left(\mfrac{3
  n^3}{140}+\mfrac{n^2}{35}+\mfrac{n}{140}\right)N_2(n)\\
  +\left(\mfrac{59
  n^4}{630}+\mfrac{121 n^3}{1260}+\mfrac{223 n^2}{60480}-\mfrac{1271
  n}{544320}+\mfrac{317}{5806080}\right) p(n),
\end{multline}

\begin{multline}\label{eqn:spt5-modular}
  \spt_5(n)=\mfrac{551}{93405312000}a_{10,1}(24n-1)+\left(\mfrac{2831
  n}{14370048000}+\mfrac{4043}{43110144000}\right)a_{8,1}(24n-1)\\
  +\left(\mfrac{151
  n^2}{77837760}+\mfrac{19
  n}{4717440}+\mfrac{3281}{3736212480}\right)a_{6,1}(24n-1)\\
  + \left(\mfrac{71
  n^3}{1995840}+\mfrac{85 n^2}{798336}+\mfrac{485
  n}{6386688}+\mfrac{355}{43110144}\right)a_{4,1}(24n-1)\\
  +\left(-\mfrac{n^4}{560}-\mfrac
  {n^3}{168}-\mfrac{3 n^2}{560}-\mfrac{n}{840}\right) N_2(n)\\
  +\left(-\mfrac{107
  n^5}{12600}-\mfrac{8 n^4}{315}-\mfrac{349 n^3}{20160}-\mfrac{61
  n^2}{155520}+\mfrac{751 n}{1935360}-\mfrac{2407}{261273600}\right) p(n).
\end{multline}

\end{lemma}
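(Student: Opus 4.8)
The plan is to deduce both identities from the structure theory of crank- and rank-moment generating functions, and then to reduce the statement to a finite Fourier-coefficient comparison. Write $C_{2k}(z)=\sum_n M_{2k}(n)q^n$ and $R_{2k}(z)=\sum_n N_{2k}(n)q^n$. By \eqref{eqn:spt-defn} we have $\spt_j=\mu_{2j}-\eta_{2j}$, while \eqref{eqn:mu-defn}--\eqref{eqn:eta-defn}, on expanding the binomial weights as polynomials in $m$ and discarding the odd parts, express $\mu_{2j}$ and $\eta_{2j}$ as explicit $\mathbb{Q}$-linear combinations of the $M_{2k}$ and $N_{2k}$, $1\le k\le j$ (with $p=M_0=N_0$); hence $\sum_n\spt_j(n)q^n$ is a $\mathbb{Q}$-linear combination of the $C_{2k}(z)$ and $R_{2k}(z)$, $0\le k\le j$. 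By Atkin--Garvan \cite[(4.8)]{MR2035811}, $q^{-1/24}\eta(z)C_{2k}(z)$ is a quasimodular form of weight $2k$; by \cite[Lemma 8.5]{MR4490457}, $q^{-1/24}\eta(z)R_{2k}(z)$ lies in $\mathbb{C}[E_2,E_4,E_6]\oplus\bigoplus_{i=0}^{k-1}\mathbb{Q}\,\Theta^i(R_2)$. Combining these --- the combination being organized via Garvan's formula \cite[(1.3)]{MR2822233}, the entry point of the Mathematica computation --- yields
\[
q^{-1/24}\eta(z)\sum_{n}\spt_j(n)q^n \;=\; P_j(E_2,E_4,E_6)\;+\;\sum_{i=0}^{j-1}\gamma^{(j)}_i\,\Theta^i\!\bigl(R_2(z)\bigr)
\]
for an explicitly computable quasimodular form $P_j\in\mathbb{Q}[E_2,E_4,E_6]$ of weight $\le 2j$ and rationals $\gamma^{(j)}_i$.

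The second step is to translate this into the $a_{k,1}$/$p$/$N_2$ form. Recall that $q^{1/24}E_k(z)/\eta(z)=\sum_{n\ge0}a_{k,1}(24n-1)q^n$ (with $E_0=1$ and $a_{0,1}=p$), that $q^{1/24}/\eta(z)=\sum_n p(n)q^n$ and $R_2(z)=\sum_n N_2(n)q^n$, and that $\Theta=q\,d/dq$ multiplies the $n$-th Fourier coefficient by $n$; thus the right-hand sides of \eqref{eqn:spt4-modular} and \eqref{eqn:spt5-modular} are the coefficient sequences of explicit $\mathbb{Q}$-linear combinations of $\Theta^t\!\bigl(q^{1/24}E_k/\eta\bigr)$ and $\Theta^i(R_2)$. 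I would multiply the displayed identity by $q^{1/24}/\eta$ and use the logarithmic-derivative identity $\Theta\eta=\tfrac1{24}E_2\eta$, Ramanujan's identities $\Theta E_2=\tfrac1{12}(E_2^2-E_4)$, $\Theta E_4=\tfrac13(E_2E_4-E_6)$, $\Theta E_6=\tfrac12(E_2E_6-E_4^2)$, and the level-one decomposition, valid for even $w\ge4$, of the space of weight-$w$ quasimodular forms as the weight-$w$ modular forms plus $\Theta$ of the weight-$(w-2)$ quasimodular forms, in order to eliminate every occurrence of $E_2$ and rewrite $\tfrac{q^{1/24}}{\eta}P_j$ (and likewise the $\tfrac{q^{1/24}}{\eta}\Theta^iR_2$) as a $\mathbb{Q}$-linear combination of $\Theta^t\!\bigl(q^{1/24}E_k/\eta\bigr)$ and $\Theta^i(R_2)$ --- of exactly the shape of \eqref{eqn:spt4-modular}, \eqref{eqn:spt5-modular}. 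The decisive point is that the modular forms arising in this reduction all have weight in $\{0,4,6,8,10\}$, which are exactly the weights at which the space of level-one modular forms is one-dimensional, spanned by the Eisenstein series $E_w$; each therefore contributes a single $a_{k,1}$, and no cusp-form term such as $\Delta/\eta=\eta^{23}$ enters.

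It remains to certify the explicit rational coefficients, which I would do by a finite computation. Both sides of \eqref{eqn:spt4-modular} (respectively \eqref{eqn:spt5-modular}), after multiplication by $q^{-1/24}\eta$, lie in the finite-dimensional space spanned by the quasimodular forms of weight $\le 2j$ together with the functions $\Theta^i\!\bigl(q^{-1/24}\eta R_2\bigr)$, $0\le i\le j-1$; equivalently, only finitely many of the sequences $n^t a_{k,1}(24n-1)$, $n^t p(n)$, $n^t N_2(n)$ occur in the claimed identity, and these are $\mathbb{Q}$-linearly independent. Hence it suffices to verify agreement of the first $N$ Fourier coefficients for $N$ larger than the dimension of this ambient space; there $\spt_j(n)$ is obtained from Garvan's weight function $\omega_j$ via \cite[(1.3)]{MR2822233}, the $a_{k,1}(24n-1)$ from the $q$-expansion of $E_k(z)/\eta(z)$, and $p(n)$ and $N_2(n)$ from the partition and rank generating functions. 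This is the computer verification alluded to in the statement.

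The main obstacle is, I expect, twofold. First, the bookkeeping: tracking the exact rational coefficients through the iterated elimination of $E_2$ (and the corresponding reduction of the $\Theta^i R_2$ terms) is lengthy and error-prone, which is why it is delegated to a computer-algebra system. Second --- and this is what confines the method to $j\le5$ --- every modular form that surfaces must have weight in the set $\{0,4,6,8,10,14\}$ occurring in the definition of $Z$, at which level-one modular forms carry no cusp form; already for $\spt_6$ one reaches weight $12$, where the cusp form $\Delta$ (hence $\eta^{23}$) appears, and the expansion would then require a basis element lying outside the $a_{k,1}$/$p$/$N_2$ family.
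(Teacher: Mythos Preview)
Your proposal is correct and follows essentially the same approach as the paper: the paper simply states that one expands the generating functions for $\spt_4,\spt_5$ in $\mathbb{C}[E_2,E_4,E_6]\oplus_{i=0}^{j-1}\mathbb{Q}\,\Theta^i(R_2)$ via \cite[Lemma~8.5]{MR4490457} and \cite[(1.3)]{MR2822233}, and carries out the computation in Mathematica. Your write-up is a faithful (and more detailed) description of what that computation actually does, including the elimination of $E_2$ via Ramanujan's identities, the absence of cusp-form contributions in the relevant weights, and the reduction to a finite coefficient check.
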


The remaining claims of Theorem \ref{thm: spt-main} follow from this lemma using an analysis similar to that for $ \spt_2 $ and $ \spt_3 $  above. 

We demonstrate this in the case of $ \spt_4 $. Let $ \ell\geq 5 $ be a prime and $ \left(\tfrac{-n}{\ell} \right)=1 $. Substituting $ n\mapsto \tfrac{\ell^{2m}n+1}{24}  $ in \eqref{eqn:spt4-modular} yields
\begin{multline}
  \spt_4 \left(\frac{\ell^{2m}n+1}{24}\right) = -\frac{67}{191600640}a_{8,1}(\ell^{2m}n)+\left(-\frac{43 \ell^{2
    m}n}{71850240}-\frac{1}{186624}\right)a_{6,1}(\ell^{2m}n)+\\
     \left(-\frac{23  \ell^{4 m}n^2}{34836480}-\frac{\ell^{2 m}n}{55296}-\frac{37}{552960}\right)a_{4,1}(\ell^{2m}n)+\left(\frac{ \ell^{7 m}n^3}{3870720}+\frac{\ell^{5
    m}n^2}{110592}+\frac{37\ell^{3 m} n }{552960}+\frac{5 \ell^m}{86016}\right)h_{\ell^{2m}}(n)\\
    +\left(\frac{\ell^{8 m}n^4 }{1492992}+\frac{65  \ell^{6
    m}n^3}{2985984}+\frac{37 \ell^{4 m}n^2 }{276480}+\frac{5\ell^{2 m}n}{86016}\right)p\left(\frac{\ell^{2m}n+1}{24}\right).
  \end{multline}
The least common multiple of the denominators is $ 1149603840 = 2^{12}\cdot 3^{6}\cdot 5\cdot 7\cdot 11; $ however, $ a_{k,1}(\ell^{2m}n)\equiv 0\pmod{\ell^{5m}} $ for $ k=4,6,8 $ and $ \frac{5\ell^{2 m}n}{86016}p\left(\frac{\ell^{2m}n+1}{24}\right) \equiv 0\pmod{\ell^{3m+1}}. $ Therefore we need only consider the coefficient of $ h_{\ell^{2m}} $: because the denominator of $ \frac{5 \ell^m}{86016} $ is divisible by $ 7 $ we lose a power of $ 7 $ but gain a power of $ 5 $ in the numerator.\par 

In the case of $\spt_5\left(\frac{\ell^{2m}n+1}{24} \right) $ each term in the quasimodular expansion is divisible by powers of $ \ell $ larger than $ m+1 $  except $ \frac{-35}{3538944}\ell^{m}h_{\ell^{2m}}(n), $ hence the higher-power congruence for $ \spt_5 $ when $ \ell=5 $ or $ 7. $

\bibliographystyle{alpha}
\bibliography{hecke-rln.bib}
\end{document}